\newcounter{stepctr}
{\end{list}}
\newtheorem{thm}{Theorem}[section]
\newtheorem{prop}[thm]{Proposition}
\newtheorem{cor}[thm]{Corollary}
\theoremstyle{definition}
\newtheorem{dfn}[thm]{Definition}
\newtheorem{ex}[thm]{Example}
\newtheorem{exs}[thm]{Examples}
\newtheorem{rema}[thm]{Remark}
\newtheorem{lem}[thm]{Lemma}
\newtheorem{prob*}{Open problem}
\newcommand{\demo}{\begin{proof}}
\newcommand{\h}{\mathcal{H}}
\newcommand{\A}{\mathcal{A}}
\newcommand{\N}{\mathbb{N}}
\newcommand{\C}{\mathbb{C}}
\newcommand{\ind}[1]{{\rm ind}\,({#1})}
\def\ll^2{{\mathcal L}(\ell^2(\N))}
\def\u{{\mathcal U} }
\def\f^0x{{\mathcal F^0}(X) }
\title
{\bf  Property $(z),$ direct sums and a note on an a-Browder type
theorem }
\author{  A. Arroud,  H. Zariouh }
\date{}
\begin{document}

\maketitle \thispagestyle{empty}

\begin{abstract}\noindent\baselineskip=10pt
We characterize the properties $(z)$ and $(az)$  for an operator $T$
whose dual $T^*$ has the SVEP on the complementary of the upper
semi-Weyl spectrum of $T.$ If $S$ and $T$ are Banach space operators
satisfying  property $(z)$ or $(az),$ we give conditions on $S$ and
$T$ to ensure the preservation of these properties  by
the direct sum $S\oplus T.$ Some results are given for multipliers
and in general for $(H)$-operators. Also we give a correct proof of
\cite[Theorem 2.3]{SZ} which was proved by using the equality
$\sigma_p^0(S\oplus T)= \sigma_p^0(S)\cup \sigma_p^0(T).$ However
this equality is not true; we give  counterexamples to show that.
\end{abstract}

 \baselineskip=15pt
 \footnotetext{\small \noindent  2010 AMS subject
classification: Primary 47A53, 47A10, 47A11 \\
\noindent Keywords: Property $(z)$, property $(az)$, upper semi-Weyl
spectrum, direct sum.} \baselineskip=15pt
\section{Introduction and preliminaries}
Let $X$ denote an infinite dimensional complex Banach space, and
denote by $L(X)$ the algebra of all bounded linear operators on $X.$
For $T\in L(X),$ we  denote by $\alpha(T)$ the dimension of the
kernel $N(T)$ and by $\beta(T)$ the codimension of the range $R(T).$
By $\sigma (T), \sigma_a(T), \sigma_s(T),$ we denote the spectrum,
 the approximate spectrum and the surjectivity
spectrum of $T,$
respectively.\\
 Recall that $T$ is said to be \textit{upper semi-Fredholm}, if $R(T)$ is
closed and $\alpha(T) <\infty,$ while $T$ is called \textit{lower
semi-Fredholm}, if $R(T)$ is closed and  $\beta(T) < \infty.$ $T\in
L(X)$ is said to be \textit{semi-Fredholm} if $T$ is either an upper
semi-Fredholm or a lower semi-Fredholm operator. $T$ is
\textit{Fredholm} if $T$ is upper semi-Fredholm and lower
semi-Fredholm. If $T$ is semi-Fredholm then the index of $T$ is
defined by $\ind T = \alpha(T) -\beta(T).$ For an operator $T \in
L(X),$ the \textit{ascent} $a(T)$ and the \textit{descent} $d(T)$
are defined by $a(T) = \inf\{n\in\N : N(T^n) = N(T^{n+1})\}$ and
$d(T)= \inf\{n\in\N :R(T^n) = R(T^{n+1})\},$ respectively; the
infimum over the empty set is taken $\infty.$ If the ascent and the
descent of $T$  are both finite, then $a(T) = d(T)=p,$  and $R(T
^p)$ is closed. An operator $T$ is said to be \textit{Weyl} if it is
Fredholm of index zero. It is called \textit{upper semi-Weyl}
(resp., \textit{lower semi-Weyl}) if it is upper semi-Fredholm of
index $\leq 0$ (resp., lower semi-Fredholm of index $\geq 0$). $T$
is called \textit{upper semi-Browder} if it is an upper
semi-Fredholm operator with finite ascent and it is called
\textit{Browder} if it is Fredholm of finite
ascent and descent. \\
If $T\in L(X)$ and $ n \in \N,$ we denote  by $T_n$ the restriction
of  $T$ on $R(T^n).$ $T$ is said to be \textit{upper semi b-Weyl}, if there
exists $n\in\N$ such that $R(T^n) $ is closed and $T_n:
R(T^n)\rightarrow R(T^n)$ is upper semi-Weyl.\\
 We
recall that a complex number $\lambda\in\sigma (T)$ is a
\textit{pole} of the resolvent of $T,$ if $T-\lambda I$ has finite
ascent and finite descent and $\lambda\in\sigma_a(T)$ is a
\textit{left pole} of $T$ if $p=a(T-\lambda I) <\infty$ and
$R(T^{p+1})$ is closed.

In the following  list, we summarize  the notations and symbols
needed later.

\smallskip

\noindent $\mbox{iso}\,A$: isolated points of a  subset $A\subset \mathbb{C},$\\
 \noindent $\mbox{acc}\,A$: accumulations  points of a subset $A\subset \mathbb{C},$\\
 \noindent $A^C$: the complementary of a subset $A\subset \mathbb{C},$\\
\noindent $D(0, 1)$: the closed unit disc in $\mathbb{C},$\\
$C(0, 1)$: the  unit circle of $\mathbb{C},$\\
 $\h(\sigma(T))$: the set of all analytic functions defined on an open
neighborhood of  $\sigma(T),$\\
$p_0(T)$: poles of $T,$\\
$p_{00}(T)$: poles of $T$ of finite rank,\\
$p_0^a(T)$: left  poles of $T,$\\
$p_{00}^a(T)$: left  poles of $T$ of finite rank,\\
$\sigma_{p}(T)$:  eigenvalues of $T,$\\
$\sigma_{p}^0(T)$: eigenvalues of $T$ of finite multiplicity,\\
$\pi_0(T):=\mbox{iso}\,\sigma(T)\cap\sigma_{p}(T),$ \\
$\pi_{00}(T):=\mbox{iso}\,\sigma(T)\cap\sigma_{p}^0(T),$\\
$\pi_0^a(T):=\mbox{iso}\,\sigma_a(T)\cap\sigma_{p}(T),$\\
$\pi_{00}^a(T):=\mbox{iso}\,\sigma_a(T)\cap\sigma_{p}^0(T),$\\
$\sigma_{uf}(T) = \{\lambda\in \C : T-\lambda I$  is not upper
semi-Fredholm\}: upper semi-Fredholm spectrum,\\
$\sigma_{lf}(T) = \{\lambda\in \C : T-\lambda I$  is not lower
semi-Fredholm\}: lower semi-Fredholm spectrum,\\
$\rho(T)=\C\setminus\sigma(T);$ $\rho_a(T)=\C\setminus\sigma_a(T);$ $\rho_{uf}(T)=\C\setminus\sigma_{uf}(T),$\\
$\sigma_{b}(T)=\sigma(T)\setminus p_{00}(T)$:  Browder spectrum of $T,$\\
$\sigma_{ub}(T)=\sigma_a(T)\setminus p_{00}^a(T)$:  upper Browder spectrum of $T,$\\
$\sigma_{w}(T)$: Weyl spectrum of $T,$\\
$\sigma_{uw}(T)$: upper semi-Weyl spectrum of $T,$\\
$\sigma_{lw}(T)$: lower semi-Weyl spectrum of $T,$\\
$\sigma_{ubw}(T)$: upper semi-b-Weyl spectrum of $T,$

\begin{dfn}\cite{barnes}, \cite{rako}, \cite{Z1} \label{dfn1}Let $T\in L(X).$ $T$ is said to satisfy\\
a) a-Browder's theorem if  $\sigma_a(T)\setminus\sigma_{uw}(T)=p_{00}^a(T).$\\
b) a-Weyl's theorem if  $\sigma_a(T)\setminus\sigma_{uw}(T)=\pi_{00}^a(T).$\\
c) property $(z)$  if $\,\sigma(T)\setminus\sigma_{uw}(T)=\pi_{00}^a(T).$\\
d)  property $(az)$ if
$\sigma(T)\setminus\sigma_{uw}(T)=p_{00}^a(T).$\\
e) property $(gaz)$ if
$\sigma(T)\setminus\sigma_{ubw}(T)=p_{0}^a(T).$\\
f)  property $(gz)$
if $\sigma(T)\setminus\sigma_{ubw}(T)=\pi_0^a(T).$

\end{dfn}

The relationship between a-Browder's theorem and the properties
given in the precedent definition was studied in \cite{Z1}, and it
is summarized as follows:
\begin{center} property $(gz)\,\,\Longrightarrow \,\, \mbox{ property } (z)\,\,\Longrightarrow\,\, \mbox{ property }(az)\,\,\Longrightarrow \,\,\mbox{ a-Browder's theorem
}$          \end{center}
          \begin{center}
    $\mbox{ property }(az)\,\,\Longleftrightarrow \,\,\mbox{ property } (gaz) $
      \end{center}
Moreover, in \cite{Z1}  counterexamples were given to  show that the
reverse of
 each implication in the diagram is not true.

The following property named SVEP has relevant role in local
spectral theory. For more details see the recent monographs
\cite{Aie} and \cite{LN}.

\begin{dfn}\cite{LN} An operator $T\in L(X)$ is said to have the \textit{single valued
extension property} (SVEP ) at $\lambda_{0}\in\mathbb{C}$ , if for
every open neighborhood $\u$ of $\lambda_{0}$, the only analytic
function $f:\u\longrightarrow X$ which satisfies the equation
$(T-\lambda I)f(\lambda)=0$ for all $\lambda\in\u$ is the function
$f\equiv0$. An operator $T\in L(X)$ is said to have the SVEP if $T$
has  the SVEP at every point $\lambda\in\mathbb{C}$.\end{dfn}

It follows easily  that $T\in L(X)$ has the  SVEP at every point of the boundary $\partial\sigma(T)$ of the spectrum $\sigma(T).$ In particular, $T$ has the SVEP at every    point of
$\mbox{iso}\,\sigma(T).$ We also have \[a(T-\lambda_0 I)<\infty \Longrightarrow
\mbox{ T has the SVEP at } \lambda_0, \,\,\,(I_1)\] and dually
\[d(T-\lambda_0 I)<\infty \Longrightarrow \mbox{ $T^*$ has the SVEP
at } \lambda_0,\,\,\,(I_2)\] where $T^*$ denotes the dual of $T$,
see \cite[Theorem 3.8]{Aie}. Furthermore, if $T-\lambda_0 I$ is
semi-Fredholm then the implications above are equivalences.

\section{Properties $(az),\,(z)$ and SVEP  }

An important class of operators is given by the multipliers on a
semi-simple Banach algebra $\A.$ Recall that an operator $T\in
L(\A)$ is a multiplier if $aT(b) = T(a)b,\,\,\,\forall\,\, a, b \in
\A.$

\begin{prop}\label{pr0} If $T$ is a multiplier on a semi-simple Banach algebra $\A,$ then\\
    i) $T$ has the SVEP, $a(T)\leq 1$ and $\alpha(T)\leq \beta(T).$\\
    ii) If in addition $\A$ is commutative regular and Tauberian then properties $(z)$ and $(az)$ hold for $T$
    and they hold for $T^*$ too if $T^*$ has the SVEP.
\end{prop}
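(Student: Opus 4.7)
The plan is to prove (i) using only the multiplier identity $aT(b)=T(a)b$ together with semi-simplicity of $\A$, and then derive (ii) by combining (i) with the classical spectral identity $\sigma(T)=\sigma_a(T)=\sigma_s(T)$ available for multipliers on commutative regular Tauberian semi-simple Banach algebras.

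For (i), the three assertions are standard facts in multiplier theory (see Aiena's monograph, Chapter 4, or Laursen--Neumann); I would reproduce them as follows. To show $a(T)\le 1$ it suffices to prove $N(T)\cap R(T)=\{0\}$. Given $y=Tx$ with $Ty=0$, the multiplier identity applied to $(a,y)$ yields $T(a)\,y=aT(y)=0$ and $y\,T(a)=T(y)\,a=0$ for every $a\in\A$, so $y$ annihilates $R(T)$ on both sides; in particular $y^{2}=y\,T(x)=0$. Semi-simplicity then forces $y=0$. With $N(T)\cap R(T)=\{0\}$ in hand, the quotient map $\A\to\A/R(T)$ restricts to an injection on $N(T)$, yielding $\alpha(T)\le\beta(T)$. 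For SVEP, I note that $T-\lambda I$ is itself a multiplier, so $N(T-\lambda I)\cap R(T-\lambda I)=\{0\}$ for every $\lambda\in\C$; given analytic $f:\u\to\A$ with $(T-\lambda I)f(\lambda)\equiv 0$, differentiating in $\lambda$ shows $f(\lambda)=(T-\lambda I)f'(\lambda)\in N(T-\lambda I)\cap R(T-\lambda I)$, whence $f\equiv 0$.

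For (ii), the decisive extra input is the classical identity $\sigma(T)=\sigma_a(T)=\sigma_s(T)$ for multipliers on commutative semi-simple regular Tauberian Banach algebras (Laursen--Neumann). Since $T$ has SVEP by (i), a-Browder's theorem holds, i.e.\ $\sigma_a(T)\setminus\sigma_{uw}(T)=p_{00}^{a}(T)$, and substituting $\sigma(T)$ for $\sigma_a(T)$ yields property $(az)$. Property $(z)$ then reduces to showing $p_{00}^{a}(T)=\pi_{00}^{a}(T)$: the inclusion $\subset$ follows from SVEP of $T$ (left poles are isolated in $\sigma_a(T)$), while $\supset$ exploits $a(T-\lambda I)\le 1$ from (i) together with isolation of $\lambda$ in $\sigma_a(T)$ to extract closedness of $R((T-\lambda I)^{2})$.

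The statements for $T^{*}$ go through in parallel once $T^{*}$ is assumed to have SVEP: a-Browder's theorem then applies to $T^{*}$, and the dual spectral equalities $\sigma_a(T^{*})=\sigma_s(T)=\sigma(T)=\sigma(T^{*})$ convert it into property $(az)$ for $T^{*}$, while property $(z)$ follows by the analogous pole/isolated-point identification. The step I expect to be the main obstacle is the passage from the double annihilation $y\,R(T)=R(T)\,y=0$ and $y^{2}=0$ to $y=0$ in (i)---this is where semi-simplicity genuinely enters, and some care is needed if $\A$ is not assumed commutative---together with, in (ii), the verification of $p_{00}^{a}(T^{*})=\pi_{00}^{a}(T^{*})$ without the multiplier structure being available on $\A^{*}$.
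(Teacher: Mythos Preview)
Your proposal is essentially correct. For (i) you reproduce the standard arguments that the paper simply cites from Aiena's monograph (Theorems 3.4 and 4.32 there); your flagged concern about the non-commutative case is apt, since those cited results are stated for commutative $\A$, and indeed $y^2=0$ alone does not force $y=0$ in a non-commutative semi-simple algebra.

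For (ii) you take a genuinely different route from the paper. The paper invokes a-Weyl's theorem directly (Aiena, Theorem 5.118), obtaining $\sigma_a(T)\setminus\sigma_{uw}(T)=\pi_{00}^a(T)$; then the spectral identity $\sigma(T)=\sigma_a(T)$ (Aiena, Corollary 5.88) together with \cite[Theorem 2.4]{Z1} yields property $(z)$ in one step, with $(az)$ following as a corollary. You instead start from the weaker a-Browder's theorem (for which only SVEP is needed), obtain $(az)$ first, and then upgrade to $(z)$ by establishing $p_{00}^a(T)=\pi_{00}^a(T)$ via the finitely a-polaroid behaviour coming from $a(T-\lambda I)\le 1$ together with $\sigma(T)=\sigma_a(T)$. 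Your route is longer but more self-contained, recovering part of the content of Aiena's Theorem 5.118 from first principles; the paper's route is shorter because the passage from a-Browder to a-Weyl is absorbed into the citation. Your residual worry about $T^*$ is unnecessary: since every isolated point of $\sigma_a(T^*)=\sigma(T)$ is, by part (i), a pole of $T$ of order at most one and hence a pole of $T^*$, the identification $p_{00}^a(T^*)=\pi_{00}^a(T^*)$ goes through without any multiplier structure on $\A^*$.
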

\begin{proof}i) See \cite[Theorem 4.32]{Aie} and \cite[Theorem
3.4]{Aie}.\\
ii) From \cite[Corollary 5.88]{Aie} we have
$\sigma_{a}(T)=\sigma(T)=\sigma_{a}(T^*)$ and from \cite[Theorem
5.118]{Aie}, a-Weyl's theorem holds for $T$ and it holds for $T^*$ if
it has the SVEP. The conclusion follows from \cite[Theorem 2.4]{Z1}
and since property $(z)$ entails property $(az).$
\end{proof}

Now we give a characterization of property $(az)$ for a linear
operator $T$ whose dual $T^*$ has the SVEP on ${\sigma_{uw}(T)}^C.$
By duality we give a similar result for $T^*.$

\begin{thm} \label{prop1}Let $T\in L(X),$ then:\\
 i) $T^*$ has the SVEP on  ${\sigma_{uw}(T)}^C$ if and only if $T$
 satisfies property $(az).$\\
 ii) $T$ has the SVEP on  ${\sigma_{lw}(T)}^C$ if and only if $T^*$
 satisfies property $(az).$
\end{thm}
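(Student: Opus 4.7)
My plan is to prove part~(i) directly; part~(ii) follows by a parallel argument with $T$ and $T^*$ interchanged, using $\sigma_{uw}(T^*)=\sigma_{lw}(T)$, $\sigma_a(T^*)=\sigma_s(T)$ and $\sigma(T)=\sigma(T^*)$, so I will not repeat it. In either direction of~(i) the inclusion $p_{00}^a(T)\subseteq \sigma(T)\setminus\sigma_{uw}(T)$ is automatic: any upper semi-Browder operator is upper semi-Weyl (finite ascent plus semi-Fredholmness forces index $\leq 0$), hence $\sigma_{uw}(T)\subseteq \sigma_{ub}(T)$ and $p_{00}^a(T)=\sigma_a(T)\setminus\sigma_{ub}(T)\subseteq \sigma(T)\setminus\sigma_{uw}(T)$.

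For the ``only if'' direction, assume $T^*$ has the SVEP on $\sigma_{uw}(T)^C$ and pick $\lambda\in \sigma(T)\setminus\sigma_{uw}(T)$. Then $T-\lambda I$ is upper semi-Weyl, and the semi-Fredholm equivalence of~$(I_2)$ converts SVEP of $T^*$ at $\lambda$ into $d(T-\lambda I)<\infty$. A finite descent together with semi-Fredholmness forces $\text{ind}(T-\lambda I)\geq 0$, which combined with the hypothesis $\text{ind}(T-\lambda I)\leq 0$ pins the index at zero; Weyl plus finite descent then upgrades $T-\lambda I$ to a Browder operator, so $\lambda\in p_{00}(T)\subseteq p_{00}^a(T)$.

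The ``if'' direction is the conceptual core. Assume property $(az)$ and fix $\lambda_0\in \sigma_{uw}(T)^C$; the goal is that $T^*$ has SVEP at $\lambda_0$. If $\lambda_0\in \rho(T)$ there is nothing to prove, so suppose $\lambda_0\in \sigma(T)$: property $(az)$ then places $\lambda_0$ in $p_{00}^a(T)$, making $T-\lambda_0 I$ upper semi-Browder. My strategy is to show that $\lambda_0$ is \emph{isolated} in $\sigma(T)=\sigma(T^*)$, for then SVEP of $T^*$ at $\lambda_0$ is automatic. Standard perturbation theory for upper semi-Browder operators yields a punctured disc around $\lambda_0$ on which $T-\lambda I$ is bounded below (in particular $\lambda\notin \sigma_a(T)$), while openness of upper semi-Fredholmness and local constancy of the index keep $T-\lambda I$ upper semi-Weyl throughout. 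If any such $\lambda$ were in $\sigma(T)$, property $(az)$ would place it in $p_{00}^a(T)\subseteq \sigma_a(T)$, contradicting bounded-belowness; so the punctured disc lies in $\rho(T)$ and $\lambda_0$ is isolated. The main obstacle is precisely this step: one genuinely needs the full strength of property $(az)$ rather than just a-Browder's theorem, because a-Browder's theorem controls only $\sigma_a(T)\cap \sigma_{uw}(T)^C$ and would not preclude spectral points in the stratum $(\sigma(T)\setminus\sigma_a(T))\cap \sigma_{uw}(T)^C$ that the argument must rule out.
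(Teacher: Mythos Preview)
Your proof is correct but follows a different path from the paper's in both directions. For the forward implication the paper first invokes \cite[Theorem~2.2]{AiP} to obtain a-Browder's theorem for $T$, and then proves separately that $\sigma(T)=\sigma_a(T)$ (using the SVEP of $T^*$ together with the equivalence in $(I_2)$ to show that any $\mu_0\notin\sigma_a(T)$ has $\beta(T-\mu_0 I)=0$); combining these two facts yields property $(az)$. You instead argue pointwise: for each $\lambda\in\sigma(T)\setminus\sigma_{uw}(T)$ you use $(I_2)$ to get finite descent, balance the index at zero, and upgrade to Browder directly, which is more elementary and avoids the external citation. For the converse, the paper simply notes that $\lambda_0\in p_{00}^a(T)$ is isolated in $\sigma_a(T)=\sigma_s(T^*)$ and concludes that $T^*$ has SVEP at $\lambda_0$; this tacitly uses the equality $\sigma(T)=\sigma_a(T)$, which is part of the characterization of $(az)$ in \cite[Theorem~3.2]{Z1}, so that $\lambda_0$ is in fact isolated in $\sigma(T)=\sigma(T^*)$. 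Your punctured-disc argument makes this isolation in $\sigma(T)$ explicit from first principles, using only the definition of $(az)$ and semi-Fredholm perturbation theory. The paper's route is shorter by leaning on \cite{AiP} and \cite{Z1}; yours is self-contained and makes the role of the full property $(az)$ (as opposed to mere a-Browder's theorem) more transparent.
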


\begin{proof}
$T^*$  has the SVEP on ${\sigma_{uw}(T)}^C$ then from \cite[Theorem
2.2]{AiP}, $T$ satisfies a-Browder's theorem
$\sigma_a(T)\setminus\sigma_{uw}(T)=p_{00}^a(T).$ We only have to
prove that $\sigma(T)=\sigma_a(T).$ Let $\mu_0\not\in\sigma_{a}(T)$
be arbitrary, then $\mu_0\not\in\sigma_{uw}(T),$  $T-\mu_0 I$ is
injective and $R(T)$ is closed. So $T-\mu_0 I$ is an upper
semi-Fredholm operator, and by the implication $(I_2)$ above we
conclude that   $\beta(T-\mu_0 I)=0.$ Hence
$T-\mu_0 I$ is surjective and $\mu_0\not\in\sigma(T).$\\ Conversely,
suppose that $T$ satisfies $(az).$ Let
$\lambda_0\in\sigma_{uw}(T)^C$ be arbitrary. We distinguish two
cases: if $\lambda_0\not\in \sigma(T)= \sigma(T^*)$ then $T^*$ has
the SVEP at $\lambda_0.$ If $\lambda_0\in \sigma(T)$ then
$\lambda_0\in p_{00}^a(T).$ Thus $\lambda_0$ is isolated in
$\sigma_a(T)=\sigma_{s}(T^*)$ and hence $T^*$ has the SVEP at
$\lambda_0.$

 ii) By the duality between $T$ and $T^*$ the proof goes similarly
 with (i).
\end{proof}

\begin{rema}\label{rema0}
In   Theorem \ref{prop1}, we cannot replace the SVEP for $T^*$ on ${\sigma_{uw}(T)}^C$ (resp., the SVEP for $T$ on ${\sigma_{lw}(T)}^C$) by the SVEP for $T$ on ${\sigma_{uw}(T)}^C$ (resp., the SVEP for $T^*$ on ${\sigma_{lw}(T)}^C).$
 Here  and elsewhere $R$ and $L$ denote the unilateral right and left shifts operators on $\ell^2(\mathbb{\N})$ defined  by
 $R(x_1,x_2,\ldots)=(0,x_1,x_2, x_3,\ldots)$ and $L(x_1,x_2, x_3,\ldots)=(x_2, x_3, \ldots).$  Evidently $L^*=R$ has the SVEP, but $R$  does not satisfy property $(az),$ since $\sigma(R)=D(0, 1),$ $\sigma_{uw}(R)=C(0, 1)$ and $p_{00}^a(R)=\emptyset.$\\
 Another example on $\ell^2(\mathbb{\N})$ is given  by $T(x_1,x_2, x_3,\ldots)=(\frac{1}{2}x_1, 0, x_2,  x_3, \ldots).$ We have $T^*$ has the SVEP; since its approximate  spectrum $\sigma_a(T^*)=C(0, 1)\cup\{\frac{1}{2}\}$ has empty interior. But, $T^*$ does not satisfy property $(az);$  since $\sigma(T^*)=D(0, 1),$  $\sigma_{uw}(T^*)=C(0, 1)$ and $p_{00}^a(T^*)=\{\frac{1}{2}\}.$  \end{rema}

\begin{prop}Suppose that the dual $T^*$ of  $T\in L(X)$ has the SVEP, then \\
 i) If  $Q\in L(X)$ is  a quasi-nilpotent operator which commutes  with $T,$ then
  $f(T)+Q$ and $f(T+Q)$ satisfy property $(az)$  for every $f\in\h(\sigma(T)).$ \\
ii) If  $K\in L(X)$ is  an algebraic (resp., $F$ a finite rank)
operator which commutes  with $T,$ then $T+K$ (resp., $T+F$)
satisfies property $(az).$
\end{prop}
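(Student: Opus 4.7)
The plan is to apply Theorem~\ref{prop1}(i), which says that an operator $S$ satisfies property $(az)$ exactly when $S^*$ has the SVEP on $\sigma_{uw}(S)^C$; in particular, full SVEP of $S^*$ is sufficient. So for each target operator $S$ (in turn $f(T)+Q$, $f(T+Q)$, $T+K$, $T+F$) it is enough to show that the dual has the SVEP, and from there the proof reduces to verifying that the SVEP assumed for $T^*$ survives the various operations being performed.

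For part (i), taking duals turns $Q$ into a quasi-nilpotent operator $Q^*$ that commutes with $T^*$. Since the SVEP is preserved under commuting quasi-nilpotent perturbations, $(T+Q)^* = T^*+Q^*$ has the SVEP; then the SVEP passes through the analytic functional calculus to give SVEP for $(f(T+Q))^* = f((T+Q)^*)$ for every $f\in\h(\sigma(T+Q))=\h(\sigma(T))$ (the two sets coincide because a commuting quasi-nilpotent perturbation does not move the spectrum). For $f(T)+Q$ one performs the same two stability steps in the opposite order: push the SVEP of $T^*$ through the functional calculus to obtain SVEP for $(f(T))^* = f(T^*)$, and then perturb by $Q^*$ (which still commutes with $f(T^*)$, since it commutes with $T^*$) to conclude that $(f(T)+Q)^*$ has the SVEP.

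For part (ii), algebraicity and commutativity both transfer to adjoints: if $p(K)=0$ then $p(K^*)=0$, and $K^*$ commutes with $T^*$. The standard perturbation theorem that SVEP is preserved under commuting algebraic perturbations then gives SVEP for $(T+K)^* = T^*+K^*$, and Theorem~\ref{prop1}(i) yields property $(az)$ for $T+K$. The finite-rank case is subsumed, since every finite-rank operator is algebraic (apply Cayley--Hamilton to the action of $F$ on its finite-dimensional invariant range). The only point requiring care is to have in stock the three stability properties of SVEP that the argument rests on \textemdash\ transfer to duals, preservation under commuting quasi-nilpotent or algebraic perturbations, and preservation under the analytic functional calculus. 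All three are classical results of local spectral theory, so beyond citing them the proof is a direct invocation of Theorem~\ref{prop1}(i).
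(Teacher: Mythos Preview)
Your proposal is correct and follows essentially the same route as the paper: reduce via Theorem~\ref{prop1}(i) to showing that the dual has the SVEP, then invoke the standard stability results for SVEP under the analytic functional calculus and under commuting quasi-nilpotent or algebraic perturbations. The only minor difference is that the paper handles the finite-rank case by a separate citation, whereas you subsume it under the algebraic case via Cayley--Hamilton; this is a harmless (and arguably cleaner) simplification.
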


\begin{proof}  i) We know from \cite[Theorem 2.40]{Aie} that if   $T^*$ has the SVEP, then $(f(T))^*=f(T^*)$ has the SVEP. Since $Q$ is quasi-nilpotent and commutes with $T$ then from \cite[Corollary 2.12]{Aie}, $T^*+Q^*$ has the SVEP. It follows that
$(f(T+Q))^*=f((T+Q)^*)$ and   $(f(T)+Q)^*$ have the SVEP. From Theorem \ref{prop1},  $f(T)+Q$ and $f(T+Q)$ satisfy property $(az).$ \\
ii) If $K$ is algebraic and commutes with $T,$ then $K^*$ is also
algebraic and commutes with $T^*.$ From \cite[Theorem 2.14]{Ai3},
$T^*+K^*=(T+K)^*$ has the SVEP. Hence $T+K$ satisfies property
$(az).$ If $F$ is a finite rank operator and commutes with $T,$ then
$T^*+F^*=(T+F)^*$ has the SVEP, see the proof of  \cite[Lemma
2.8]{Ai4}. Hence $T+F$ satisfies property $(az).$
\end{proof}

From Theorem \ref{prop1} and \cite[Theorem 3.6]{Z1} and
\cite[Corollary 3.7]{Z1}, we obtain immediately the following
characterizations for properties $(z)$ and $(gz).$ We recall that
$T\in L(X)$ is said to be \textit{finitely a-polaroid}  if every
isolated point of $\sigma_a(T)$ is a left pole  of $T$ of finite
rank and is said to be
 \textit{a-polaroid} if every isolated point of $\sigma_a(T)$ is a left pole  of $T.$ Note that every finitely
  a-polaroid operator is a-polaroid, but the converse is not true.
  For this, consider the operator $P$ defined on $\ell^2(\mathbb{\N})$
  by: $P(x_1,x_2,\ldots)=(0,x_2, x_3,\ldots).$ Then
  $\mbox{iso}\,\sigma_a(T)=\{0, 1\}=p_0^a(P)$ and $p_{00}^a(P)=\{0\}.$

\begin{cor} \label{cor1}Let $T\in L(X),$ then:\\
i)  $T$ satisfies property $(z)$ if and only if  $T^*$ has the SVEP
on ${\sigma_{uw}(T)}^C$ and $\pi_{00}^a(T)=p_{00}^a(T).$   In particular, if $T$ is finitely  a-polaroid, then
$T$ satisfies property $(z)$ if and only if  $T^*$ has the SVEP
on ${\sigma_{uw}(T)}^C.$\\
ii)  $T$ satisfies property $(gz)$ if and only if  $T^*$ has the SVEP
on ${\sigma_{uw}(T)}^C$ and $\pi_{0}^a(T)=p_{0}^a(T).$   In particular, if $T$ is   a-polaroid, then
$T$ satisfies property $(gz)$ if and only if  $T^*$ has the SVEP
on ${\sigma_{uw}(T)}^C.$\\
iii)  $T^*$ satisfies property $(z)$ if and only if  $T$ has the
SVEP on ${\sigma_{lw}(T)}^C$ and $\pi_{00}^a(T^*)=p_{00}^a(T^*).$\\
iv)  $T^*$ satisfies property $(gz)$ if and only if  $T$ has the
SVEP on ${\sigma_{lw}(T)}^C$ and $\pi_{0}^a(T^*)=p_{0}^a(T^*).$
\end{cor}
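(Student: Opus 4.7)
The plan is to leverage Theorem \ref{prop1} together with the decomposition results \cite[Theorem 3.6]{Z1} and \cite[Corollary 3.7]{Z1}, which express property $(z)$ (resp., $(gz)$) as the conjunction of property $(az)$ (resp., $(gaz)$) with the set equality $\pi_{00}^a(T) = p_{00}^a(T)$ (resp., $\pi_0^a(T) = p_0^a(T)$). Once this decomposition is taken as input, each of the four items becomes a direct substitution, so the work is largely a matter of bookkeeping.

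For part (i), I would start by comparing the defining identities of $(z)$ and $(az)$ from Definition \ref{dfn1} to obtain
\[
(z) \Longleftrightarrow (az) \text{ and } \pi_{00}^a(T) = p_{00}^a(T),
\]
which is precisely the content of \cite[Theorem 3.6]{Z1}. Then Theorem \ref{prop1}(i) allows me to replace property $(az)$ by the hypothesis that $T^*$ has SVEP on ${\sigma_{uw}(T)}^C$, delivering the equivalence. Part (ii) proceeds in parallel: \cite[Corollary 3.7]{Z1} supplies $(gz) \Longleftrightarrow (gaz) \text{ and } \pi_0^a(T) = p_0^a(T)$; the equivalence $(az) \Longleftrightarrow (gaz)$ from the diagram in Section 1 then reduces the $(gaz)$-condition to the same SVEP hypothesis through Theorem \ref{prop1}(i).

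Parts (iii) and (iv) are the duality versions, obtained by applying exactly the same argument to the dual operator and invoking Theorem \ref{prop1}(ii) in place of (i); no new idea is required, since the pairs $(\sigma(T),\sigma(T^*))$, $(\sigma_{uw}(T),\sigma_{lw}(T))$ and $(\sigma_a(T),\sigma_s(T^*))$ play symmetric roles.

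Finally, for the two \emph{in particular} statements, I would verify that the set equality is automatic under the (finitely) a-polaroid hypothesis. The inclusions $p_{00}^a(T) \subseteq \pi_{00}^a(T)$ and $p_0^a(T) \subseteq \pi_0^a(T)$ hold unconditionally, since every left pole lies in $\mathrm{iso}\,\sigma_a(T)$ and is an eigenvalue. If $T$ is finitely a-polaroid, then $\mathrm{iso}\,\sigma_a(T) \subseteq p_{00}^a(T)$, so $\pi_{00}^a(T) \subseteq p_{00}^a(T)$ and equality follows; the a-polaroid case for (ii) is identical with the finite-rank qualifier removed. The only mildly delicate point of the whole argument is to marshal the results in the correct order, i.e.\ to identify $(az)$ with the SVEP condition via Theorem \ref{prop1} \emph{before} applying the \cite{Z1} decomposition, but no substantive obstacle arises.
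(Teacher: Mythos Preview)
Your proposal is correct and follows essentially the same route as the paper, which simply states that the corollary is an immediate consequence of Theorem \ref{prop1} together with \cite[Theorem 3.6]{Z1} and \cite[Corollary 3.7]{Z1}. You have merely spelled out the details (including the use of the equivalence $(az)\Longleftrightarrow(gaz)$ for part (ii) and the verification of the set equalities under the (finitely) a-polaroid hypotheses), but no new ingredient beyond those already cited by the paper is introduced.
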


\begin{rema} In  Corollary \ref{cor1}, we do not expect neither property $(z)$ nor property $(gz)$ for
an operator $T$ with only (as hypothesis) the SVEP of its dual on
${\sigma_{uw}(T)}^C.$ Indeed, let $T$ be the operator defined on
$\ell^2(\mathbb{\N})$ by $T(x_1,x_2,\ldots)=(\frac{x_2}{2},
\frac{x_3}{3},\ldots)$ then $T^*$ has the SVEP, but $T$ does not
satisfy property $(z)$ and then it does not satisfy property $(gz)$ too. Note that here $\pi_{00}^a(T)=\pi_{0}^a(T)=\{0\}$ and
$p_{00}^a(T)=p_{0}^a(T)=\emptyset.$

\end{rema}

\section{Properties $(az),$ $(z)$ and direct sums}

In the following,
$Y$ denotes an infinite dimensional  complex Banach space.

\begin{dfn} Let $T\in L(X)$ and $S\in L(Y)$. We say that $T$ and $S$ have a shared stable sign
index if for each $\lambda\in \rho_{uf}(T)$ and each
$\mu\in\rho_{uf}(S)$, $\mbox{ind}(T-\lambda I)$ and
$\mbox{ind}(S-\mu I)$ have the same sign.
\end{dfn}

\begin{exs}
We give some examples of operators with shared stable sign index.

    (a) For an hyponormal operator $T$ on a Hilbert space we always have $
    \mbox{ind}(T-\lambda I)\leq 0,$ for each $\lambda\in \rho_{uf}(T).$ Hence two hyponormal operators acting on Hilbert
  spaces have a shared stable sign index.

     (b) By Proposition \ref{pr0}, two multipliers on semi-simple Banach
     algebras have a shared stable sign index. Moreover, according
     to \cite[Theorem
4.5]{AiL}, any multiplier $T$ on a commutative semi-simple algebra
is Fredholm if and only it is upper semi-Fredholm and in this case
$\mbox{ind} (T)=0.$

     c) If $S$ and $T$ are two operators having the SVEP on the complementary of their
      upper semi-Fredholm spectra respectively, then they
     have a shared stable sign index. The same occurs when $S^*$ and $T^*$ have the  SVEP on the complementary of their
      lower semi-Fredholm spectra respectively.

Suppose for instance that $S$ has the SVEP on $\rho_{uf}(S)$  and
$T$ has the SVEP on $\rho_{uf}(T).$ If $\lambda\in \rho_{uf}(S)$ and
$\mu\in\rho_{uf}(T)$ then $S-\lambda I$ and $T-\mu I$ are upper
semi-Fredholm and since $S$ and $T$ have the SVEP at $\lambda$ and
$\mu$ respectively, then from the implication $(I_1)$ above we have
$a(S-\lambda I)$ and $a(T-\mu I)$ are finite. Hence from
\cite[Theorem 3.4]{Aie}, $S$ and $T$  have a shared stable index.

Note that the definition of \textsl{shared stable sign index} used
here is weaker and  slightly different from \cite[Definition
1.2]{Az}.
\end{exs}

\begin{lem}\label{lem1} Let $S\in L(X)$ and $T\in
L(Y),$ then the following properties hold:\\
i) $\sigma_{uw}(S\oplus
T)\subseteq\sigma_{uw}(S)\cup\sigma_{uw}(T).$\\
ii) If $S\oplus T$ satisfies a-Browder's theorem, then
$\sigma_{uw}(S\oplus T)=\sigma_{uw}(S)\cup
\sigma_{uw}(T).$ \\
 iii) If $S$ and $T$ have a shared
stable sign index, then $\sigma_{uw}(S\oplus
T)=\sigma_{uw}(S)\cup\sigma_{uw}(T).$\\ In particular, this equality
holds if  $S$ and $T$ have the SVEP .

\end{lem}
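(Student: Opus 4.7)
The plan is to handle part (i) directly from the elementary direct-sum identities $N((S\oplus T)-\lambda I)=N(S-\lambda I)\oplus N(T-\lambda I)$ and $R((S\oplus T)-\lambda I)=R(S-\lambda I)\oplus R(T-\lambda I)$. If $\lambda\notin\sigma_{uw}(S)\cup\sigma_{uw}(T)$, both summands are upper semi-Fredholm with non-positive index, so $(S\oplus T)-\lambda I$ is upper semi-Fredholm and its index is the sum of the two individual indices (with the convention $-\infty+c=-\infty$), which stays $\leq 0$. Hence $\lambda\notin\sigma_{uw}(S\oplus T)$.

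For (ii) and (iii), the new content is the reverse inclusion $\sigma_{uw}(S)\cup\sigma_{uw}(T)\subseteq\sigma_{uw}(S\oplus T)$. Starting from $\lambda\notin\sigma_{uw}(S\oplus T)$, the same direct-sum identities immediately show that $S-\lambda I$ and $T-\lambda I$ are each upper semi-Fredholm; the remaining issue is to exclude a strictly positive index on either summand.

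For (ii), I would split on whether $\lambda\in\sigma_a(S\oplus T)=\sigma_a(S)\cup\sigma_a(T)$. If $\lambda$ is outside, then both $S-\lambda I$ and $T-\lambda I$ are bounded below, so their kernels are trivial and their indices are automatically $\leq 0$. If $\lambda\in\sigma_a(S\oplus T)$, then a-Browder's theorem for $S\oplus T$ places $\lambda$ in $p_{00}^a(S\oplus T)$, forcing $(S\oplus T)-\lambda I$ to have finite ascent; from $a(A\oplus B)\geq\max\{a(A),a(B)\}$ the ascents of $S-\lambda I$ and $T-\lambda I$ are finite as well. Implication $(I_1)$ then supplies SVEP at $\lambda$ for both $S$ and $T$, and combined with the upper semi-Fredholm property this pins both individual indices down to be $\leq 0$.

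For (iii), the shared stable sign hypothesis rules out the only pathological scenario directly: if one of $\mbox{ind}(S-\lambda I), \mbox{ind}(T-\lambda I)$ were strictly positive, the other would be too by shared sign, and the sum could not be $\leq 0$, contradicting $\lambda\notin\sigma_{uw}(S\oplus T)$. The ``in particular'' assertion follows because SVEP at a semi-Fredholm point forces a non-positive index, so two SVEP operators automatically share a non-positive sign on their upper semi-Fredholm regions. The main obstacle is the case $\lambda\in\sigma_a(S\oplus T)$ in (ii); once one notices that a-Browder's theorem delivers the finite ascent needed to invoke $(I_1)$ summand by summand, the rest of the argument is bookkeeping.
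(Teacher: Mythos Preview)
Your argument is correct throughout. Parts (i) and (iii) match the paper's proof essentially line for line: for (i) both proofs simply add the two non-positive indices, and for (iii) both proofs observe that the summands become upper semi-Fredholm and then use the shared sign hypothesis to force each individual index to be $\leq 0$.

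The genuine difference is in part (ii). The paper does not work pointwise at all: it invokes the general equivalence ``a-Browder's theorem $\Longleftrightarrow \sigma_{uw}=\sigma_{ub}$'' to write $\sigma_{uw}(S\oplus T)=\sigma_{ub}(S\oplus T)$, then uses the standing identity $\sigma_{ub}(S\oplus T)=\sigma_{ub}(S)\cup\sigma_{ub}(T)$ together with the always-valid inclusion $\sigma_{uw}(\cdot)\subseteq\sigma_{ub}(\cdot)$ to obtain $\sigma_{uw}(S)\cup\sigma_{uw}(T)\subseteq\sigma_{uw}(S\oplus T)$, and concludes with (i). Your route instead unpacks a-Browder's theorem at the specific point $\lambda$: from $\lambda\in p_{00}^a(S\oplus T)$ you extract finite ascent of $(S\oplus T)-\lambda I$, push it down to each summand via $a(A\oplus B)\geq\max\{a(A),a(B)\}$, and then use $(I_1)$ (or equivalently the fact that finite ascent forces $\alpha\leq\beta$) to conclude $\mbox{ind}(S-\lambda I)\leq 0$ and $\mbox{ind}(T-\lambda I)\leq 0$. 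Both approaches are short; the paper's is cleaner if one is willing to quote the additivity of $\sigma_{ub}$ for direct sums as a known fact, while yours is more self-contained and shows explicitly \emph{why} the reverse inclusion holds at the level of a single point.
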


\begin{proof} i) If $\lambda\not\in\sigma_{uw}(S)\cup\sigma_{uw}(T)$ , then $S-\lambda I$ and $T-\lambda I$
are upper semi-Weyl operators. Hence $(S\oplus T)-\lambda I$ is an
upper semi-Fredholm operator with $\mbox{ind}((S\oplus T)-\lambda
I)=\mbox{ind}(S-\lambda I)+\mbox{ind}(T-\lambda I)\leq0.$ So
$\lambda\not\in\sigma_{uw}(S\oplus T).$

ii) If $S\oplus T$ satisfies a-Browder's theorem, then
$\sigma_{uw}(S\oplus T)=\sigma_{ub}(S\oplus T).$\\ As
 $\sigma_{ub}(S\oplus T)=\sigma_{ub}(S)\cup\sigma_{ub}(T),$ then  $\sigma_{uw}(S\oplus T)=\sigma_{ub}(S)\cup\sigma_{ub}(T).$ Since the
 inclusion\\
  $\sigma_{uw}(S)\cup\sigma_{uw}(T)\subset  \sigma_{ub}(S)\cup\sigma_{ub}(T)$ is always true, we then have
  $\sigma_{uw}(S)\cup\sigma_{uw}(T)\subset \sigma_{uw}(S\oplus T).$ We conclude by i) that $\sigma_{uw}(S\oplus
T)=\sigma_{uw}(S)\cup\sigma_{uw}(T).$

 iii) If  $\lambda\not\in\sigma_{uw}(S\oplus
T),$ then  $(S\oplus T)-\lambda I$ is an upper semi-Weyl operator.
It follows that both $S-\lambda I$ and $T-\lambda I$ are upper
semi-Fredholm . Since $\mbox{ind}((S\oplus T)-\lambda
I)=\mbox{ind}(S-\lambda I)+\mbox{ind}(T-\lambda I)\leq0$ and $S$ and
$T$ have a shared stable sign index, we have $\mbox{ind}(S-\lambda
I)\leq0$ and $\mbox{ind}(T-\lambda I)\leq0.$ Hence $\lambda\not\in
\sigma_{uw}(S)\cup\sigma_{uw}(T).$
\end{proof}

\begin{rema}\label{rema1}  The inclusion showed in the  first  statement  is proper: for this let $R$ and $L$ be the operators defined on $\ell^2(\mathbb{\N})$  in Remark \ref{rema0}.
 We then  have $\sigma_{uw}(R\oplus L)=C(0, 1)$ and $\sigma_{uw}(R)\cup \sigma_{uw}(L)=D(0, 1).$ Observe that
  $L$ and $R$ are upper semi-Fredholm operators with $\mbox{ind}(R)=-1$ and $\mbox{ind}(L)=1.$
\end{rema}

In the next theorem,  we characterize the stability of
property
 $(az)$ under diagonal operator matrices in terms of upper semi-Weyl  spectra of its components.

\begin{thm}\label{T1}Suppose that  $S\in L(X)$ and $T\in L(Y)$ satisfy
property $(az),$ then
 $S\oplus T$ satisfies property $(az)$ if and only if
 $\sigma_{uw}(S\oplus T)=\sigma_{uw}(S)\cup
\sigma_{uw}(T).$\end{thm}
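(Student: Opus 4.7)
The plan is to prove the two directions as essentially one-liners, after reformulating property $(az)$ as the conjunction of a-Browder's theorem and the inclusion $\sigma(R)\setminus\sigma_{uw}(R)\subseteq\sigma_a(R)$. Throughout I will freely use $\sigma(S\oplus T)=\sigma(S)\cup\sigma(T)$, $\sigma_a(S\oplus T)=\sigma_a(S)\cup\sigma_a(T)$ and $\sigma_{ub}(S\oplus T)=\sigma_{ub}(S)\cup\sigma_{ub}(T),$ which are standard for direct sums.

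For the forward direction, if $S\oplus T$ satisfies $(az)$ then, by the implication chain quoted right after Definition \ref{dfn1}, it satisfies a-Browder's theorem. Lemma \ref{lem1}(ii) immediately gives the spectral identity $\sigma_{uw}(S\oplus T)=\sigma_{uw}(S)\cup\sigma_{uw}(T).$

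For the converse, assume that the equality $\sigma_{uw}(S\oplus T)=\sigma_{uw}(S)\cup\sigma_{uw}(T)$ holds. First I would establish a-Browder's theorem for $S\oplus T.$ Since $S$ and $T$ satisfy $(az),$ they satisfy a-Browder's theorem, so $\sigma_{uw}(S)=\sigma_{ub}(S)$ and $\sigma_{uw}(T)=\sigma_{ub}(T).$ Combining this with the additivity of $\sigma_{ub}$ for direct sums and the standing hypothesis yields
\[
\sigma_{ub}(S\oplus T)=\sigma_{ub}(S)\cup\sigma_{ub}(T)=\sigma_{uw}(S)\cup\sigma_{uw}(T)=\sigma_{uw}(S\oplus T),
\]
so $\sigma_a(S\oplus T)\setminus\sigma_{uw}(S\oplus T)=p_{00}^a(S\oplus T).$

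It then remains to upgrade $\sigma_a$ to $\sigma$ on the left-hand side. One inclusion is trivial. For the other, fix $\lambda\in\sigma(S\oplus T)\setminus\sigma_{uw}(S\oplus T);$ without loss of generality $\lambda\in\sigma(S).$ The hypothesis forces $\lambda\notin\sigma_{uw}(S),$ so property $(az)$ for $S$ gives $\lambda\in p_{00}^a(S)\subseteq\sigma_a(S)\subseteq\sigma_a(S\oplus T).$ Thus $\sigma(S\oplus T)\setminus\sigma_{uw}(S\oplus T)\subseteq\sigma_a(S\oplus T)\setminus\sigma_{uw}(S\oplus T)=p_{00}^a(S\oplus T),$ and the reverse inclusion $p_{00}^a(S\oplus T)\subseteq\sigma(S\oplus T)\setminus\sigma_{uw}(S\oplus T)$ is automatic. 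I do not anticipate any real obstacle here: the key conceptual step is simply recognising that the spectral hypothesis is exactly what is needed to transfer a-Browder's theorem and the inclusion $\sigma\setminus\sigma_{uw}\subseteq\sigma_a$ from the summands to $S\oplus T,$ and these two facts together are equivalent to $(az).$
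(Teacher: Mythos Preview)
Your proof is correct and follows essentially the same route as the paper: the forward direction is identical (property $(az)\Rightarrow$ a-Browder $\Rightarrow$ Lemma~\ref{lem1}(ii)), and for the converse both arguments transfer a-Browder's theorem to $S\oplus T$ via $\sigma_{ub}(S\oplus T)=\sigma_{ub}(S)\cup\sigma_{ub}(T)$ and then upgrade $\sigma_a$ to $\sigma$. The only cosmetic difference is that the paper invokes \cite[Theorem~3.2]{Z1} to get $\sigma(S)=\sigma_a(S)$ and $\sigma(T)=\sigma_a(T)$ outright (hence $\sigma(S\oplus T)=\sigma_a(S\oplus T)$ immediately), whereas you verify the needed inclusion $\sigma(S\oplus T)\setminus\sigma_{uw}(S\oplus T)\subseteq\sigma_a(S\oplus T)$ elementwise; the content is the same.
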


\begin{proof}
($\Leftarrow$) Since $S$ and $T$ satisfy property $(az)$ then by
\cite[Theorem 3.2]{Z1}, $S$ and $T$ satisfy a-Browder theorem,
$\sigma(S)= \sigma_a(S)$ and $\sigma(T)=\sigma_a(T).$  It follows
that $\sigma_{uw}(S)=\sigma_{ub}(S),$
$\sigma_{uw}(T)=\sigma_{ub}(T)$ and $\sigma(S\oplus T)=
\sigma_a(S\oplus T).$ Moreover as $\sigma_{ub}(S\oplus
T)=\sigma_{ub}(S)\cup\sigma_{ub}(T)$ we have : $\sigma_{ub}(S\oplus
T)= \sigma_{uw}(S)\cup\sigma_{uw}(T)= \sigma_{uw}(S\oplus T).$ This
implies that $S\oplus T$ satisfies a-Browder's theorem. According
to \cite[Theorem 3.2]{Z1} we deduce that $S\oplus T$ satisfies
property $(az).$

($\Rightarrow$) Suppose that property $(az)$ holds for $S\oplus T$
then  $S\oplus T$ satisfies a-Browder's theorem, and by Lemma
\ref{lem1}, it follows that $\sigma_{uw}(S\oplus
T)=\sigma_{uw}(S)\cup \sigma_{uw}(T).$
\end{proof}

 As a consequence of Lemma \ref{lem1} and Theorem \ref{T1} we have the next
 corollary.
\begin{cor}\label{thm1} Suppose that  $S\in L(X)$ and $T\in L(Y)$ satisfy
property $(az),$ and have a shared stable sign
index then
 $S\oplus T$ satisfies property $(az).$\end{cor}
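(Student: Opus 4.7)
The plan is to simply combine the two preceding results, since the corollary is designed to be an immediate consequence of Lemma \ref{lem1}(iii) and Theorem \ref{T1}.

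First I would invoke Lemma \ref{lem1}(iii). The hypothesis that $S$ and $T$ have a shared stable sign index is exactly the assumption of that part of the lemma, and it gives the spectral identity
\[
\sigma_{uw}(S\oplus T)=\sigma_{uw}(S)\cup \sigma_{uw}(T).
\]
This identity is precisely what is needed to activate the characterization provided by Theorem \ref{T1}.

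Next I would apply Theorem \ref{T1}. The remaining hypotheses of that theorem (namely that both $S$ and $T$ satisfy property $(az)$) are given by assumption, and the spectral equality obtained in the previous step is exactly the ``if and only if'' condition in its statement. Therefore the theorem directly yields that $S\oplus T$ satisfies property $(az)$, which is the desired conclusion.

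Since the corollary is framed explicitly as a consequence of Lemma \ref{lem1} and Theorem \ref{T1}, there is essentially no obstacle: the proof is a two-line assembly. The only subtlety worth pointing out is that the shared stable sign index assumption is used \emph{solely} to secure the spectral equality for $\sigma_{uw}$ of the direct sum; once this is in hand, property $(az)$ for $S\oplus T$ follows mechanically from the ($\Leftarrow$) direction of Theorem \ref{T1}, with no further work on the characteristic sets $p_{00}^a$ or the identification $\sigma(S\oplus T)=\sigma_a(S\oplus T)$, as these are already built into the proof of Theorem \ref{T1}.
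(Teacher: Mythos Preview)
Your proposal is correct and matches the paper's approach exactly: the paper simply states that the corollary is a consequence of Lemma~\ref{lem1} and Theorem~\ref{T1}, without writing out any further details. Your two-step assembly (shared stable sign index $\Rightarrow$ $\sigma_{uw}(S\oplus T)=\sigma_{uw}(S)\cup\sigma_{uw}(T)$ via Lemma~\ref{lem1}(iii), then the $(\Leftarrow)$ direction of Theorem~\ref{T1}) is precisely what is intended.
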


We recall that $\sigma_{p}(S\oplus
T)=\sigma_{p}(S)\cup\sigma_{p}(T)$ and $\alpha(S\oplus
T)=\alpha(S)+\alpha(T)$
 for every pair of operators so that $\sigma_p^0(S\oplus T)=\{\lambda\in
\sigma_p^0(S)\cup\sigma_p^0(T) \,|\, \alpha(S-\lambda
I)+\alpha(T-\lambda I)<\infty\}.$ Moreover, if  $A$  and $B$ are
bounded subsets of the complex plane $\mathbb{\C}$ then
$\mbox{acc}(A \cup B)=\mbox{acc}(A)\cup\mbox{acc}(B).$

\begin{lem}\label{lem2} If  $S\in L(X)$ and $T\in L(Y)$ satisfy $\sigma_{p}^0(S)=\sigma_{p}^0(T),$ then

i) $\pi_{00}^a(S\oplus T)=\pi_{00}^a(S)\cap\pi_{00}^a(T),$

 ii) $\pi_{00}(S\oplus T)=\pi_{00}(S)\cap\pi_{00}(T),$

  iii) $p_{00}(S\oplus T)=p_{00}(S)\cap p_{00}(T),$

 iv) $p_{00}^a(S\oplus T)=p_{00}^a(S)\cap p_{00}^a(T).$

 And if $\sigma_{p}(S)=\sigma_{p}(T),$ then

v) $\pi_{0}^a(S\oplus T)=\pi_{0}^a(S)\cap\pi_{0}^a(T).$
\end{lem}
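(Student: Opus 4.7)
The plan is to prove all five identities in parallel, relying on the three identities
$\sigma(S\oplus T)=\sigma(S)\cup\sigma(T)$, $\sigma_a(S\oplus T)=\sigma_a(S)\cup\sigma_a(T)$, together with the sum rule $\alpha(S\oplus T-\lambda I)=\alpha(S-\lambda I)+\alpha(T-\lambda I)$ and the set-theoretic fact $\mbox{acc}(A\cup B)=\mbox{acc}\,A\cup\mbox{acc}\,B$ recorded just before the lemma. The latter gives $\mbox{iso}\,\sigma_a(S\oplus T)\cap\sigma_a(S)\cap\sigma_a(T)=\mbox{iso}\,\sigma_a(S)\cap\mbox{iso}\,\sigma_a(T)\cap\sigma_a(S)\cap\sigma_a(T)$, and similarly for $\sigma$. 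For each of (i)--(v) the inclusion $\supseteq$ is a direct unpacking of these equalities; the nontrivial direction is $\subseteq$, where the hypothesis $\sigma_p^0(S)=\sigma_p^0(T)$ (or $\sigma_p(S)=\sigma_p(T)$ for (v)) is used to rule out the mixed case $\lambda\in\sigma_a(S)\setminus\sigma_a(T)$.

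For (i), assume $\lambda\in\pi_{00}^a(S\oplus T)$. If $\lambda\in\sigma_a(S)\setminus\sigma_a(T)$ then $\alpha(T-\lambda I)=0$, so $\lambda\notin\sigma_p^0(T)=\sigma_p^0(S)$; combined with $\alpha(S-\lambda I)<\infty$ (from the finiteness of $\alpha(S\oplus T-\lambda I)$) this forces $\alpha(S-\lambda I)=0$ and hence $\alpha(S\oplus T-\lambda I)=0$, contradicting $\lambda\in\sigma_p^0(S\oplus T)$. Hence $\lambda\in\sigma_a(S)\cap\sigma_a(T)$, so $\lambda$ is isolated in each $\sigma_a(S),\sigma_a(T)$ by the accumulation identity; finiteness of both $\alpha(S-\lambda I),\alpha(T-\lambda I)$ is automatic, and both are positive (otherwise the $\sigma_p^0$-equality again gives a contradiction). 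This yields $\lambda\in\pi_{00}^a(S)\cap\pi_{00}^a(T)$. Part (ii) is obtained by the same argument with $\sigma_a$ replaced throughout by $\sigma$, and part (v) is the same template with $\sigma_p$ instead of $\sigma_p^0$, exploiting that a single eigenvector of $S\oplus T$ projects to an eigenvector of $S$ or $T$ at the same eigenvalue.

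For (iii) and (iv) the same mixed-case exclusion works, supplemented by the standard decomposition identities for direct sums: $a(S\oplus T-\lambda I)=\max\{a(S-\lambda I),a(T-\lambda I)\}$, the analogous formula for $d$, and $R((S\oplus T-\lambda I)^n)=R((S-\lambda I)^n)\oplus R((T-\lambda I)^n)$, which is closed if and only if each summand is closed. Thus poleness (resp.\ left poleness) and finite rank transfer coordinate-wise, and the inclusion $\subseteq$ is concluded exactly as in (i). The $\supseteq$ direction is immediate: if $\lambda$ lies in both $p_{00}(S)$ and $p_{00}(T)$ (resp.\ $p_{00}^a(S)$ and $p_{00}^a(T)$) then the maximum of two finite ascents/descents is finite, the direct sum of two closed ranges is closed, and the sum of two finite kernel-dimensions is finite, so $\lambda\in p_{00}(S\oplus T)$ (resp.\ $p_{00}^a(S\oplus T)$).

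The only mildly delicate point I anticipate is in (iv): the left-pole condition for $S$ demands that $R((S-\lambda I)^{q+1})$ be closed for $q=a(S-\lambda I)$, whereas the direct-sum hypothesis gives closedness of $R((S-\lambda I)^{p+1})$ for $p=\max\{a(S-\lambda I),a(T-\lambda I)\}\geq q$. I would resolve this by invoking the standard fact (Grabiner's stability result) that if $a(T-\lambda I)=q<\infty$ and $R((T-\lambda I)^{n})$ is closed for some $n\geq q+1$, then $R((T-\lambda I)^{q+1})$ is closed as well. This is the only nonelementary ingredient; everything else in the proof is a bookkeeping exercise combining the accumulation identity, the kernel-sum rule, and the hypothesis $\sigma_p^0(S)=\sigma_p^0(T)$.
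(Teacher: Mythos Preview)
Your argument for (i), (ii) and (v) is essentially the paper's: the paper also starts from $\sigma_p^0(S)=\sigma_p^0(T)\Rightarrow\sigma_p^0(S\oplus T)=\sigma_p^0(S)=\sigma_p^0(T)$ (hence $\pi_{00}^a(S)\cap\rho_a(T)=\pi_{00}^a(T)\cap\rho_a(S)=\emptyset$) and then runs the same $\mbox{iso}/\mbox{acc}$ decomposition you describe, just written as a chain of set equalities rather than as a case split. No substantive difference there.

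For (iii) and (iv) you take a genuinely different route. The paper does \emph{not} argue with ascents, descents and closed ranges directly; instead it writes $p_{00}(S\oplus T)=\sigma(S\oplus T)\setminus\sigma_b(S\oplus T)$ and $p_{00}^a(S\oplus T)=\sigma_a(S\oplus T)\setminus\sigma_{ub}(S\oplus T)$ and uses the standard identities $\sigma_b(S\oplus T)=\sigma_b(S)\cup\sigma_b(T)$ and $\sigma_{ub}(S\oplus T)=\sigma_{ub}(S)\cup\sigma_{ub}(T)$. After that the computation is the same purely set-theoretic one as in (i), and the mixed terms vanish because $p_{00}(S)\subset\sigma_p^0(S)=\sigma_p^0(T)\subset\sigma(T)$, etc. This completely sidesteps the ``mildly delicate point'' you anticipated: the Grabiner-type stability of closed ranges is already baked into the equality $\sigma_{ub}(S\oplus T)=\sigma_{ub}(S)\cup\sigma_{ub}(T)$, so the paper never has to name it. Your approach is correct (the Grabiner fact you cite is exactly what is needed to pass from closedness of $R((S-\lambda I)^{p+1})$ with $p=\max\{a(S-\lambda I),a(T-\lambda I)\}$ down to $R((S-\lambda I)^{q+1})$ with $q=a(S-\lambda I)$), but it is more hands-on; the paper's version is shorter because it outsources that work to the known Browder/upper-Browder spectrum formulas for direct sums.
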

\begin{proof} i) As   $\sigma_p^0(T)=\sigma_p^0(S)$\\
 then   $\sigma_p^0(S\oplus
T)=\sigma_p^0(S)=\sigma_p^0(T)$ and so
$\pi_{00}^a(S)\cap\rho_a(T)=\pi_{00}^a(T)\cap\rho_a(S)=\emptyset.$
Thus we have:
\begin{eqnarray*}\pi_{00}^a(S\oplus T)
&=&\{\mbox{iso\,}\sigma_a(S\oplus
T)\}\cap\sigma_p^0(S\oplus T)\\
&=&\{\mbox{iso}[\sigma_a(S)\cup
\sigma_a(T)]\}\cap\sigma_p^0(S)\\
&=&\{[\sigma_a(S)\cup\sigma_a(T)]\setminus\mbox{acc}[\sigma_a(S)\cup
\sigma_a(T)]\}\cap\sigma_p^0(S)\nonumber\\
&=&\{[\sigma_a(S)\cup\sigma_a(T)]\setminus[\mbox{acc\,}\sigma_a(S)\cup
\mbox{acc\,}\sigma_a(T)]\}\cap\sigma_p^0(S)\nonumber\\
&=&\{[\mbox{iso\,}\sigma_a(S)\cap\rho_a(T)]\cup[\mbox{iso\,}\sigma_a(T)\cap\rho_a(S)]
\cup[\mbox{iso\,}\sigma_a(S)\cap\mbox{iso\,}\sigma_a(T)]\}\cap\sigma_p^0(S)\nonumber\\
&=& [\pi_{00}^a(S)\cap\rho_a(T)]\cup[\pi_{00}^a(T)\cap\rho_a(S)]\cup
[\pi_{00}^a(S)\cap \pi_{00}^a(T)] \\
&=& \pi_{00}^a(S)\cap \pi_{00}^a(T) \label{eq8}.\end{eqnarray*}

The proof of ii)  goes similarly with i).

 iii) Since  $p_{00}(S)\cap\rho(T)= p_{00}(T)\cap\rho(S)=\emptyset$ then we have
\begin{eqnarray*}p_{00}(S\oplus T)
&=&\sigma (S\oplus T)\setminus
 \sigma_b(S\oplus T)\\
&=&[\sigma(S)\cup \sigma(T)]\setminus [\sigma_b(S)\cup
\sigma_b(T)]\\
&=&\{[\sigma(S)\setminus\sigma_b(S)]\cap\rho(T)\}\cup\{[\sigma(T)\setminus\sigma_b(T)]\cap\rho(S)\} \cup \{[\sigma(S)\setminus\sigma_b(S)]\cap[\sigma(T)\setminus\sigma_b(T)]\}\nonumber\\
&=&[p_{00}(S)\cap\rho(T)]\cup[p_{00}(T)\cap\rho(S)]\cup[p_{00}(S)\cap
 p_{00}(T)]\\
 &=&p_{00}(S)\cap
 p_{00}(T).\end{eqnarray*}

 The proof of iv)  goes similarly with iii) and the proof of v)  goes similarly with
 i).
\end{proof}

\begin{ex}\label{ex2}
Generally the equalities in Lemma \ref{lem2} are not true and the
hypothesis assumed on the point spectra are essential as we can see
in the following examples:

 a) Let $T\in L(\mathbb{\C}^n)$ be a non trivial
nilpotent operator and consider $R\in L(\ell^2(\mathbb{\N}))$ the
unilateral right shift. We have $\pi_{00}^a(R\oplus
T)=\pi_{0}^a(R\oplus T)=\{0\},$ but $\pi_{00}^a(R)\cap
\pi_{00}^a(T)=\pi_{0}^a(R)\cap \pi_{0}^a(T)=\emptyset.$ Here
$\{0\}=\sigma_p(T)=\sigma_p^0(T)\neq
\sigma_p(R)=\sigma_p^0(R)=\emptyset.$ On the other hand, if we
consider the operator $A$ defined on $L(\ell^2(\mathbb{\N}))$ by
$A(x_1,x_2, x_3,\ldots)=(0,\frac{x_1}{2}, \frac{x_2}{3},
\frac{x_3}{4},\ldots),$ then it is easily seen that
$\pi_{00}(A\oplus T)=\{0\},$ but $\pi_{00}(A)\cap
\pi_{00}(T)=\emptyset.$ Note also that
$\sigma_p^0(T)\neq\sigma_p^0(A)=\emptyset.$

 b) Consider the operator
$K$ defined on $\ell^2(\mathbb{\N})$ by $ K(x_1,x_2,
x_3,\ldots)=(x_1,\frac{x_2}{2}, \frac{x_3}{3},
\frac{x_3}{4},\ldots),$ and $T\in L(\mathbb{\C}^n)$ a non trivial
nilpotent operator. Then $\sigma_p^0(K)=p_{00}(K)= p_{00}^a(K)=
\{\frac{1}{n}\,|\,n\in\N^*\};$ $\sigma_p^0(T)=p_{00}(T)=
p_{00}^a(T)=\{0\}$ and $p_{00}(K\oplus T)=p_{00}^a(K\oplus
T)=\{\frac{1}{n}\,|\,n\in\N^*\},$ but  $p_{00}(K)\cap p_{00}(T=
p_{00}^a(K)\cap
p_{00}^a(T)=\emptyset.$

\end{ex}

In the next, we give conditions to ensure the transmission  of property $(z)$ from  $S\in L(X)$ and $T\in L(Y)$ to their direct sum  $S\oplus T.$

\begin{thm}\label{T3} Suppose  $S\in L(X)$ and $T\in L(Y)$ satisfy $\sigma_{p}^0(S)=\sigma_{p}^0(T).$ If property $(z)$ holds for $S$
and $T,$ then it holds for
 $S\oplus T$ if and only if  $\sigma_{uw}(S\oplus T)=\sigma_{uw}(S)\cup
\sigma_{uw}(T).$\end{thm}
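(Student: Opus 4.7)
The plan is to bootstrap from the analogous statement for property $(az)$ already established in Theorem \ref{T1}, exploiting the fact that, once $(az)$ is in hand, property $(z)$ is equivalent to the equality $\pi_{00}^a = p_{00}^a$. As a first step I record the observation that will drive the whole argument: since $S$ and $T$ satisfy $(z)$ and hence $(az)$, one has
\[
\pi_{00}^a(S)=p_{00}^a(S)=\sigma(S)\setminus\sigma_{uw}(S),\qquad \pi_{00}^a(T)=p_{00}^a(T)=\sigma(T)\setminus\sigma_{uw}(T).
\]

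For the forward implication I would argue that if $S\oplus T$ satisfies $(z)$, then it satisfies $(az)$, hence a-Browder's theorem, and Lemma \ref{lem1}(ii) immediately yields the desired spectral equality $\sigma_{uw}(S\oplus T)=\sigma_{uw}(S)\cup\sigma_{uw}(T)$. In fact Theorem \ref{T1} is not even needed in this direction.

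For the converse I would first apply Theorem \ref{T1} to the pair $S,T$ (both of which satisfy $(az)$); the hypothesis on upper semi-Weyl spectra then gives that $S\oplus T$ satisfies $(az)$, that is, $\sigma(S\oplus T)\setminus\sigma_{uw}(S\oplus T)=p_{00}^a(S\oplus T)$. To upgrade this to property $(z)$ it remains to show $p_{00}^a(S\oplus T)=\pi_{00}^a(S\oplus T)$. This is where the hypothesis $\sigma_p^0(S)=\sigma_p^0(T)$ enters: Lemma \ref{lem2}(i) and (iv) yield
\[
\pi_{00}^a(S\oplus T)=\pi_{00}^a(S)\cap\pi_{00}^a(T),\qquad p_{00}^a(S\oplus T)=p_{00}^a(S)\cap p_{00}^a(T),
\]
and combined with the opening observation $\pi_{00}^a(S)=p_{00}^a(S)$ and $\pi_{00}^a(T)=p_{00}^a(T)$, the two intersections coincide, so $(z)$ holds for $S\oplus T$.

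There is no genuine technical obstacle here; the proof is an orderly assembly of Theorem \ref{T1} and Lemma \ref{lem2}. The only subtle point to keep in mind is to use property $(z)$ on the summands precisely in order to collapse $\pi_{00}^a$ and $p_{00}^a$ in the identities provided by Lemma \ref{lem2}; without invoking this, the two intersection formulas would not visibly match up, and the equivalence would not close.
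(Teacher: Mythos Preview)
Your proof is correct. The forward direction coincides with the paper's argument. For the converse you take a slightly different route: you invoke Theorem~\ref{T1} to obtain property $(az)$ for $S\oplus T$, and then use both parts (i) and (iv) of Lemma~\ref{lem2} to collapse $\pi_{00}^a(S\oplus T)$ and $p_{00}^a(S\oplus T)$ via their common intersection description. The paper instead computes $\sigma(S\oplus T)\setminus\sigma_{uw}(S\oplus T)$ directly by a set-theoretic decomposition of $[\sigma(S)\cup\sigma(T)]\setminus[\sigma_{uw}(S)\cup\sigma_{uw}(T)]$, uses property $(z)$ on the summands to rewrite the pieces as $\pi_{00}^a(S)\cap\rho(T)$, $\pi_{00}^a(T)\cap\rho(S)$, and $\pi_{00}^a(S)\cap\pi_{00}^a(T)$, observes that the first two are empty because $\sigma_p^0(S)=\sigma_p^0(T)$, and then applies only Lemma~\ref{lem2}(i). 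Your approach is more modular (it isolates the $(az)$ step cleanly and is in fact the strategy the paper adopts in its \emph{second} proof of Theorem~\ref{T4}); the paper's is more self-contained for this theorem, avoiding the detour through $p_{00}^a$ and part (iv) of the lemma. Both are equally valid.
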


\begin{proof} ($\Rightarrow$)  If $S\oplus T$ satisfies property $(z),$ then
 by \cite[Theorem 3.6]{Z1}, it satisfies property
 $(az).$ As seen in the proof of Theorem \ref{T1}, we conclude that    $\sigma_{uw}(S\oplus
T)=\sigma_{uw}(S)\cup \sigma_{uw}(T).$

($\Leftarrow$)
Since $S$ and $T$  satisfy property $(z),$
 then
\begin{eqnarray*} \sigma(S\oplus
T)\setminus\sigma_{uw}(S\oplus
T)&=&[\sigma(S)\cup\sigma(T)]\setminus[\sigma_{uw}(S)\cup\sigma_{uw}(T)]\\
&=&[(\sigma(S)\setminus\sigma_{uw}(S))\cap\rho(T)]\cup[(\sigma(T)\setminus\sigma_{uw}(T))\cap\rho(S)]\nonumber\\
&&\cup[(\sigma(S)\setminus\sigma_{uw}(S))\cap (\sigma(T)\setminus\sigma_{uw}(T))]\nonumber\\
&=& [\pi_{00}^a(S)\cap\rho(T)]\cup[\pi_{00}^a(T)\cap\rho(S)]\cup[\pi_{00}^a(S)\cap
\pi_{00}^a(T)].\label{eq7}\end{eqnarray*} As   $\pi_{00}^a(T)\cap\rho(S)
=\pi_{00}^a(S)\cap\rho(T)=\emptyset,$ then
 $\sigma(S\oplus T)\setminus\sigma_{uw}(S\oplus T)=\pi_{00}^a(S)\cap \pi_{00}^a(T).$  We conclude by Lemma
 \ref{lem2} that property $(z)$ holds for $S\oplus T.$\end{proof}

We recall that $T\in L(X)$ is said to be \textit{a-isoloid} if every
isolated point of $\sigma_a(T)$ is an  eigenvalue of $T,$ and is
said to be \textit{isoloid} if every isolated point of $\sigma(T)$
is an  eigenvalue of $T.$ Clearly, if $T$ is  a-isoloid then it is
isoloid. However the converse is not true. Consider the following
example: let $T=R\oplus Q$ the operator on
$\ell^2(\mathbb{\N})\oplus\ell^2(\mathbb{\N}),$ where $R$ is the
unilateral right shift  and $Q$ is an injective quasi-nilpotent
operator. Then $\sigma(T)=D(0, 1)$ and $\sigma_a(T)=C(0,
1)\cup\{0\}.$ Therefore $T$ is isoloid but not a-isoloid.

\begin{lem}\label{cl} If   $S\in L(X)$ and $T\in L(Y)$
 are a-isoloid then $$\pi_{00}^a(S\oplus T)=[\pi_{00}^a(S)\cap\rho_a(T)]\cup[\pi_{00}^a(T)\cap\rho_a(S)]\cup
[\pi_{00}^a(S)\cap \pi_{00}^a(T)]$$
\end{lem}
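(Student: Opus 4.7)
The plan is to follow the same decomposition strategy used in the proof of Lemma \ref{lem2}(i), but without the hypothesis $\sigma_p^0(S)=\sigma_p^0(T)$ (which collapsed two of the three pieces) and instead using the a-isoloid assumption to recover the correct form.

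First I would start from $\sigma_a(S\oplus T)=\sigma_a(S)\cup\sigma_a(T)$ and apply the identity $\mbox{acc}(A\cup B)=\mbox{acc}(A)\cup\mbox{acc}(B)$ to split $\mbox{iso}\,\sigma_a(S\oplus T)$ into the disjoint union
\[
[\mbox{iso}\,\sigma_a(S)\cap\rho_a(T)]\cup[\mbox{iso}\,\sigma_a(T)\cap\rho_a(S)]\cup[\mbox{iso}\,\sigma_a(S)\cap\mbox{iso}\,\sigma_a(T)].
\]
Intersecting with $\sigma_p^0(S\oplus T)$ and using $\alpha(S\oplus T-\lambda I)=\alpha(S-\lambda I)+\alpha(T-\lambda I)$ gives the required expression piece by piece.

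For the first piece, if $\lambda\in\mbox{iso}\,\sigma_a(S)\cap\rho_a(T)$ then $\alpha(T-\lambda I)=0$, so $\lambda\in\sigma_p^0(S\oplus T)$ is equivalent to $\lambda\in\sigma_p^0(S)$; this piece contributes exactly $\pi_{00}^a(S)\cap\rho_a(T)$, and symmetrically the second piece contributes $\pi_{00}^a(T)\cap\rho_a(S)$. Notice that no a-isoloid hypothesis is needed in these two pieces. The third piece is where the hypothesis is crucial: for $\lambda\in\mbox{iso}\,\sigma_a(S)\cap\mbox{iso}\,\sigma_a(T)$, the membership $\lambda\in\sigma_p^0(S\oplus T)$ only asks that $\alpha(S-\lambda I)+\alpha(T-\lambda I)$ be finite and positive, so a priori one of the summands could vanish; the a-isoloid property of both $S$ and $T$ guarantees $\alpha(S-\lambda I)\geq 1$ and $\alpha(T-\lambda I)\geq 1$, whence the sum is finite iff each summand is finite, and this piece becomes $\pi_{00}^a(S)\cap\pi_{00}^a(T)$.

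The main obstacle, though really just a bookkeeping point, is justifying that in the third piece both $\alpha(S-\lambda I)$ and $\alpha(T-\lambda I)$ must be nonzero simultaneously. This is exactly what the a-isoloid hypothesis delivers, and without it Example \ref{ex2}(a) already shows the identity can fail. Combining the three contributions yields the announced formula; because the three pieces on the right-hand side are pairwise disjoint modulo the overlap captured by $\pi_{00}^a(S)\cap\pi_{00}^a(T)$, the union is correctly reconstructed.
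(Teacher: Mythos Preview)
Your proposal is correct and follows essentially the same approach as the paper. Both arguments decompose $\mbox{iso}\,\sigma_a(S\oplus T)$ into the three pieces determined by membership in $\sigma_a(S)$ versus $\sigma_a(T)$, and both invoke the a-isoloid hypothesis precisely in the third case $\lambda\in\mbox{iso}\,\sigma_a(S)\cap\mbox{iso}\,\sigma_a(T)$ to force $\alpha(S-\lambda I)\geq 1$ and $\alpha(T-\lambda I)\geq 1$; the only difference is that the paper disposes of the inclusion $\supset$ in one line and then does a case analysis for $\subset$, whereas you handle both inclusions simultaneously piece by piece.
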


\begin{proof} It is easy to see that the inclusion $\supset$ is
always true without condition on $S$ and $T.$\\
Suppose now that $\lambda\in \pi_{00}^a(S\oplus T),$ then $\lambda$
is isolated in $\sigma_a(S\oplus T)=\sigma_a(S)\cup\sigma_a(T).$\\
\textsl{Case}1: $\lambda\in\sigma_a(S)\setminus\sigma_a(T).$ As
$\sigma_p^0(S\oplus T)\subset \sigma_p^0(S)\cup \sigma_p^0(T)$ is
always true then $\lambda\in \pi_{00}^a(S)\cap\rho_a(T).$\\
\textsl{Case}2: $\lambda\in\sigma_a(T)\setminus\sigma_a(S).$
Similarly with case1 we conclude that $\lambda\in
\pi_{00}^a(T)\cap\rho_a(S).$\\
 \textsl{Case}3:  $\lambda\in\sigma_a(T)\cap\sigma_a(S).$ Then
 $\lambda\in \mbox{iso}\,\sigma_a(S)\cap \mbox{iso}\,\sigma_a(T)$
 and since $S$ and $T$ are a-isoloid and $\lambda$ is an eigenvalue of
 finite multiplicity of $S\oplus T,$ then $\lambda\in \pi_{00}^a(S)\cap \pi_{00}^a(T).$
\end{proof}

In the next theorem, we give a similar characterization of the
property $(z)$ for $S\oplus T$ under the hypothesis that $S$ and $T$
are a-isoloid. Notice that the condition
``$\sigma_{p}^0(S)=\sigma_{p}^0(T)$'' assumed in Theorem \ref{T3},
and the condition  ``$S$ and $T$ being a-isoloid'' of Theorem
\ref{T4} below  are independent: indeed, the operators $T$ and $R$
defined in Example \ref{ex2} are a-isoloid, but
$\sigma_p^0(R)=\emptyset$ and $\sigma_p^0(T)=\{0\}.$ Conversely, if
we consider the operator $A$ defined on $\ell^2(\mathbb{\N})$ by $
A(x_1,x_2, x_3,\ldots)=(0,\frac{x_1}{2}, \frac{x_2}{3},
\frac{x_3}{4},\ldots),$ and  the Volterra operator on the Banach
space $C[0, 1]$ defined by $V(f)(x)=\int_0^x f(t)dt \mbox{ for all }
f\in C[0, 1].$ Then $\sigma_p^0(A)=\sigma_p^0(V)=\emptyset,$ but $A$
and $V$ are not a-isoloid.

 \begin{thm}\label{T4}Suppose that  $S\in L(X)$ and $T\in L(Y)$
property $(z)$ and are a-isoloid, then
 $S\oplus T$ satisfies property $(z)$ if and only if
 $\sigma_{uw}(S\oplus T)=\sigma_{uw}(S)\cup
\sigma_{uw}(T).
$\end{thm}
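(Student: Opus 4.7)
The forward direction is immediate from what has already been established. Property $(z)$ for $S\oplus T$ entails property $(az),$ which entails a-Browder's theorem. Applying Lemma \ref{lem1}(ii) to $S\oplus T$ yields $\sigma_{uw}(S\oplus T)=\sigma_{uw}(S)\cup\sigma_{uw}(T),$ exactly as in the proof of Theorem \ref{T1}.

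For the reverse direction the plan is to decompose $\sigma(S\oplus T)\setminus\sigma_{uw}(S\oplus T)$ and to show it equals $\pi_{00}^a(S\oplus T)$ by matching it term-by-term with the formula provided by Lemma \ref{cl}. First I would use $\sigma(S\oplus T)=\sigma(S)\cup\sigma(T)$ together with the assumed equality $\sigma_{uw}(S\oplus T)=\sigma_{uw}(S)\cup\sigma_{uw}(T)$ to split
\begin{eqnarray*}
\sigma(S\oplus T)\setminus\sigma_{uw}(S\oplus T)
&=& [(\sigma(S)\setminus\sigma_{uw}(S))\cap\rho(T)]\cup[(\sigma(T)\setminus\sigma_{uw}(T))\cap\rho(S)]\\
&& \cup\,[(\sigma(S)\setminus\sigma_{uw}(S))\cap(\sigma(T)\setminus\sigma_{uw}(T))].
\end{eqnarray*}
Since $S$ and $T$ satisfy property $(z),$ each of the three pieces on the right hand side becomes an expression in $\pi_{00}^a(S)$ and $\pi_{00}^a(T),$ giving
\[
\sigma(S\oplus T)\setminus\sigma_{uw}(S\oplus T)=[\pi_{00}^a(S)\cap\rho(T)]\cup[\pi_{00}^a(T)\cap\rho(S)]\cup[\pi_{00}^a(S)\cap\pi_{00}^a(T)].
\]

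The step I expect to be the real hinge is bridging the gap between this expression (which involves the resolvents $\rho(T),\rho(S)$) and the formula of Lemma \ref{cl} for $\pi_{00}^a(S\oplus T)$ (which involves the approximate resolvents $\rho_a(T),\rho_a(S)$). Property $(z),$ being stronger than property $(az),$ forces $\sigma(S)=\sigma_a(S)$ and $\sigma(T)=\sigma_a(T)$ by the same argument used in the proof of Theorem \ref{T1}; hence $\rho(S)=\rho_a(S)$ and $\rho(T)=\rho_a(T).$ Substituting $\rho_a$ for $\rho$ and invoking Lemma \ref{cl} (whose hypothesis that $S,T$ be a-isoloid is supplied by assumption) identifies the right hand side above with $\pi_{00}^a(S\oplus T),$ which is exactly property $(z)$ for $S\oplus T.$ No other delicate points should arise; the only subtlety is noticing that one cannot quote Lemma \ref{cl} directly on the $\rho$-version of the decomposition without first upgrading $\rho$ to $\rho_a.$
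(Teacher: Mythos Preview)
Your proof is correct and follows essentially the same route as the paper: the forward direction reduces to a-Browder's theorem and Lemma \ref{lem1}(ii), while the reverse direction decomposes $\sigma(S\oplus T)\setminus\sigma_{uw}(S\oplus T)$ into the three pieces, replaces each via property $(z)$ by the corresponding $\pi_{00}^a$-term, uses $\rho=\rho_a$ (from $\sigma=\sigma_a$), and matches with Lemma \ref{cl}. The paper presents these steps in a slightly different order but the argument is identical.
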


\begin{proof} ($\Rightarrow$) See the proof of necessity condition of the precedent theorem.

($\Leftarrow$) Since $S$ and $T$ are a-isoloid, then
$$\pi_{00}^a(S\oplus
T)=[\pi_{00}^a(S)\cap\rho_a(T)]\cup[\pi_{00}^a(T)\cap\rho_a(S)]\cup
[\pi_{00}^a(S)\cap \pi_{00}^a(T)],\,\,\mbox{see Lemma \ref{cl}}.$$
On the other hand, as $S$ and $T$ satisfy property $(z)$ then
$\rho(S)=\rho_a(S)$ and $\rho(T)=\rho_a(T).$ Therefore
\begin{eqnarray*} \sigma(S\oplus
T)\setminus\sigma_{uw}(S\oplus
T)&=&[\sigma(S)\cup\sigma(T)]\setminus[\sigma_{uw}(S)\cup\sigma_{uw}(T)]\\
&=&[(\sigma(S)\setminus\sigma_{uw}(S))\cap\rho(T)]\cup[(\sigma(T)\setminus\sigma_{uw}(T))\cap\rho(S)]\nonumber\\
&&\cup[(\sigma(S)\setminus\sigma_{uw}(S))\cap (\sigma(T)\setminus\sigma_{uw}(T))]\nonumber\\
&=& [\pi_{00}^a(S)\cap\rho_a(T)]\cup[\pi_{00}^a(T)\cap\rho_a(S)]\cup[\pi_{00}^a(S)\cap
\pi_{00}^a(T)].\label{eq7}\end{eqnarray*}
Hence $\pi_{00}^a(S\oplus T)=\sigma(S\oplus
T)\setminus\sigma_{uw}(S\oplus T)$ and $S\oplus T$ satisfies property $(z).$
\end{proof}

\noindent To give the reader a good overview of the subject, we
present here another  proof of the sufficient condition of Theorem
\ref{T4}:
\begin{proof}
Since $S$ and $T$ are a-isoloid, then $$\pi_{00}^a(S\oplus
T)=[\pi_{00}^a(S)\cap\rho_a(T)]\cup[\pi_{00}^a(T)\cap\rho_a(S)]\cup
[\pi_{00}^a(S)\cap \pi_{00}^a(T)],$$ and since  $S$ and $T$ satisfy
$(z)$ then from \cite[Theorem 3.6]{Z1}, $\pi_{00}^a(T)=p_{00}^a(T)$
and $\pi_{00}^a(S)=p_{00}^a(S).$ So
 \begin{eqnarray*}\pi_{00}^a(S\oplus
T)&=&[p_{00}^a(S)\cap\rho_a(T)]\cup[p_{00}^a(T)\cap\rho_a(S)]\cup
[p_{00}^a(S)\cap p_{00}^a(T)]\\
&=&[\sigma_a(S)\cup \sigma_a(T)]\setminus[\sigma_{ub}(S)\cup \sigma_{ub}(T)]\\
&=& p_{00}^a(S\oplus T).\end{eqnarray*}
On the other
hand, $S$ and $T$ satisfy also property $(az)$ and since by
hypothesis  $\sigma_{uw}(S\oplus T)=\sigma_{uw}(S)\cup
\sigma_{uw}(T)$  then from Theorem \ref{T1}, $S\oplus T$ satisfies
property $(az).$ Hence $S\oplus T$ satisfies property $(z).$
\end{proof}

\begin{ex}\label{essential} The hypothesis ``$\sigma_{p}^0(S)=\sigma_{p}^0(T)$" in Theorem
\ref{T3} and the hypothesis ``$S$ and $T$ are a-isoloid" in Theorem
\ref{T4} are essential. Indeed,  let $S$ be the operator defined on
$\ell^2(\mathbb{\N})$ by $S(x_1,x_2,x_3,\ldots)=(0,\frac{x_1}{2},\frac{x_2}{3},
\frac{x_3}{4},\ldots)$ and $T$ be a non trivial nilpotent operator
on $\C^n.$  $S$ and $T$ satisfy property $(z)$ since
$\sigma(S)\setminus \sigma_{uw}(S)= \emptyset =\pi_{00}^a(S),$ and
$\sigma(T)\setminus \sigma_{uw}(T)= \{0\}=\pi_{00}^a(T).$ But
$S\oplus T$ does not satisfy property $(z);$  $\sigma(S\oplus
T)\setminus \sigma_{uw}(S\oplus T)= \emptyset$ and
$\pi_{00}^a(S\oplus T)=\{0\}.$ Note that here
$\sigma_{p}^0(S)=\emptyset,$ $\sigma_{p}^0(T)=\{0\}$ and $T$ is
a-isoloid but $S$ isn't.

\end{ex}

 The following theorem gives similar results to   Theorem
\ref{T3} and Theorem \ref{T4}  for  property $(gz).$ The proofs go
similarly.

 \begin{thm}\label{T5} Suppose  $S\in L(X)$ and $T\in L(Y)$ satisfy property $(gz).$\\
i)  If $\sigma_{p}(S)=\sigma_{p}(T),$  or $S$ and $T$ are a-isoloid,
then
 $S\oplus T$ satisfies property $(gz)$ if and only if  $\sigma_{ubw}(S\oplus T)=\sigma_{ubw}(S)\cup
\sigma_{ubw}(T).$
 \end{thm}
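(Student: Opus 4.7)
My plan is to mirror the arguments of Theorems \ref{T3} and \ref{T4}, with property $(z)$ replaced by property $(gz)$ and the upper semi-Weyl spectrum $\sigma_{uw}$ replaced by the upper semi-b-Weyl spectrum $\sigma_{ubw}$. The preparatory step is to establish the $\sigma_{ubw}$-analog of Lemma \ref{lem1}(i), namely $\sigma_{ubw}(S\oplus T)\subseteq\sigma_{ubw}(S)\cup\sigma_{ubw}(T)$. If $S-\lambda I$ and $T-\lambda I$ are upper semi-b-Weyl with suitable integers $n_S$ and $n_T$, I would take $n=\max(n_S,n_T)$ and check that $(S-\lambda I)_n$ and $(T-\lambda I)_n$ remain upper semi-Weyl, so that their direct sum on $R((S-\lambda I)^n)\oplus R((T-\lambda I)^n)=R((S\oplus T-\lambda I)^n)$ is upper semi-Weyl, which shows $S\oplus T-\lambda I$ is upper semi-b-Weyl.

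For the $(\Rightarrow)$ direction, I would exploit $(gz)\Rightarrow(z)\Rightarrow(az)\Leftrightarrow(gaz)$, so that $S\oplus T$ has property $(gaz)$, i.e., $\sigma(S\oplus T)\setminus\sigma_{ubw}(S\oplus T)=p_0^a(S\oplus T)$. To get the reverse inclusion $\sigma_{ubw}(S)\cup\sigma_{ubw}(T)\subseteq\sigma_{ubw}(S\oplus T)$, I take $\lambda\notin\sigma_{ubw}(S\oplus T)$ and split: either $\lambda\in\rho(S\oplus T)$, in which case the conclusion is immediate, or $\lambda\in p_0^a(S\oplus T)$, in which case finiteness of ascent and closedness of the power-ranges transfer to each summand, giving $\lambda\in [p_0^a(S)\cup\rho_a(S)]\cap[p_0^a(T)\cup\rho_a(T)]$ and hence $\lambda\notin\sigma_{ubw}(S)\cup\sigma_{ubw}(T)$, since left poles lie outside $\sigma_{ubw}$.

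For the $(\Leftarrow)$ direction, starting from property $(gz)$ for $S$ and $T$ together with the spectral hypothesis, I would compute
$$\sigma(S\oplus T)\setminus\sigma_{ubw}(S\oplus T)=[\pi_0^a(S)\cap\rho(T)]\cup[\pi_0^a(T)\cap\rho(S)]\cup[\pi_0^a(S)\cap\pi_0^a(T)].$$
Under $\sigma_p(S)=\sigma_p(T)$, the first two unionands are empty since $\pi_0^a(S)\subseteq\sigma_p(S)=\sigma_p(T)\subseteq\sigma(T)$, and Lemma \ref{lem2}(v) identifies what remains with $\pi_0^a(S\oplus T)$. Under the a-isoloid hypothesis I would first establish the $\pi_0^a$-analog of Lemma \ref{cl},
$$\pi_0^a(S\oplus T)=[\pi_0^a(S)\cap\rho_a(T)]\cup[\pi_0^a(T)\cap\rho_a(S)]\cup[\pi_0^a(S)\cap\pi_0^a(T)],$$
by the same three-case argument (the a-isoloid property forces each isolated point of $\sigma_a(S)$ or $\sigma_a(T)$ to be an eigenvalue). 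Since $(gz)\Rightarrow(az)$ gives $\rho(S)=\rho_a(S)$ and $\rho(T)=\rho_a(T)$, the two decompositions coincide. The main obstacle is the preparatory spectrum inclusion: once the correct common level $n$ is chosen one must verify that $R((S-\lambda I)^n)$ stays closed and the restriction remains upper semi-Weyl; beyond that, the proof is essentially set-theoretic bookkeeping parallel to Theorems \ref{T3} and \ref{T4}.
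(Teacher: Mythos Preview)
Your proposal is correct and follows exactly the approach the paper indicates: the paper simply states that the proof of Theorem \ref{T5} ``goes similarly'' with Theorems \ref{T3} and \ref{T4}, which is precisely what you outline. Your explicit handling of the $(\Rightarrow)$ direction (transferring left poles of $S\oplus T$ to the summands) is a reasonable unpacking of the $\sigma_{ubw}$-analog of Lemma \ref{lem1}(ii), which in the paper's framework could alternatively be obtained by noting that property $(gz)$ for $S\oplus T$ entails generalized a-Browder's theorem and then citing \cite[Lemma 2.1]{SZ}.
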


\begin{rema} Note that the hypothesis ``$\sigma_{p}(S)=\sigma_{p}(T)$ or $S$ and $T$ a-isoloid" assumed in Theorem
\ref{T5} is essential. For example, the operators $S$ and $T$
defined in Remark \ref{essential}  satisfy property $(gz),$ since
$\sigma(S)\setminus\sigma_{ubw}(S)=\emptyset=\pi_{0}^a(S)$ and
$\sigma(T)\setminus \sigma_{ubw}(T)= \{0\}=\pi_{00}^a(T).$  But,
since $S\oplus T$ does not satisfy property $(z),$ then it does not
satisfy property $(gz)$ too. We note also that the conditions  ``$S$
and $T$ a-isoloid"  and ``$\sigma_{p}(S)=\sigma_{p}(T)$" are
independent as seen in the case of Theorem \ref{T4}.
\end{rema}

\section{Applications}

We begin by recalling   the definition  of the  class of $(H)$-operators,  and definitions of some classes of operators which are
contained in the class $(H).$\\
 According to  \cite{Aie}, the \textit{quasinilpotent} part $H_0(T)$ of $T\in L(X)$ is defined
 as the set
$H_{0}(T)=\{x\in X:
\displaystyle\lim_{n\rightarrow\infty}\|T^{n}(x)\|^{\frac{1}{n}}=0\}.$
Note that   generally, $H_0(T)$  is not closed  and from
\cite[Theorem 2.31]{Aie}, if $H_0(T-\lambda I)$ is closed then $T$
has SVEP at $\lambda.$ We also recall that  $T$ is said to belong to
the class  $(H)$ if for all  $\lambda\in\mathbb{C}$
 there exists
$p:=p(\lambda)\in\mathbb{N}$ such that $H_{0}(T-\lambda
I)=N((T-\lambda I)^p),$ see \cite{Aie} for more details about
this  class of operators. Of course, every operator $T$ which
belongs  the class $(H)$ has SVEP, since $H_0(T-\lambda I)$ is
closed, observe also that $a(T-\lambda I)\leq p(\lambda),$ for every
$\lambda\in\mathbb{C}.$ The class of operators having the property
$(H)$ is large. Obviously, it contains every operator having the
property $(H_1)$. Recall that an operator $T\in L(X)$ is said to
have the property $(H_1)$ if $H_{0}(T-\lambda I)=N(T-\lambda I)$
for all $\lambda\in\mathbb{C}$. Although the property $(H_1)$ seems
to be  strong, the class of operators having the property $(H_1)$ is
considerably large.  Every \textit{totally
paranormal} operator  has property $(H_1)$, and in particular every
hyponormal operator has  property $(H_1)$. Also every
\textit{transaloid} operator or \textit{log-hyponormal}  has the
property $(H_1).$ Multipliers on a semi-simple Banach  algebra
belong to the class $(H_1).$
 Some other operators  satisfy property $(H)$; for example
\textit{M-hyponormal} operators,  \textit{p-hyponormal} operators,
\textit{algebraically p-hyponormal}  operators,
\textit{algebraically M-hyponormal} operators, \textit{subscalar}
operators and
 \textit{generalized scalar} operators. For more details  about
the definitions and comments  about these classes of operators,
  we refer the reader to \cite{Aie}, \cite{CH}, \cite{LN}.

  Now, we give an example of an operator of the class $(H)$
  which does not satisfy the properties $(az)$ and $(z).$
  \begin{ex}\label{ex4}
  Let $T$ be the hyponormal operator given by the direct sum of
  the null operator on $\ell^2(\N)$ and the unilateral right shift $R$ on $\ell^2(\N).$ Then
  $\sigma(T)= D(0, 1);\,\,\, \sigma_a(T)= C(0, 1)\cup \{0\};\,\,\, \sigma_{uw}(T)= C(0, 1)\cup \{0\}$ and
  $\pi_{00}^a(T)=p_{00}^a(T)=\emptyset.$ It follows that $T$ does not satisfy the properties
  $(z)$ and
  $(az).$
  \end{ex}

  In the following proposition we establish the stability of
  properties $(az)$ and $(z)$ by the direct sum of two
  $(H)$-operators.

  \begin{prop}If $S\in L(X)$ and $T\in L(Y)$ are $(H)$-operators satisfying property $(az)$ (resp., property $(z)$) then $S\oplus T$ satisfies property $(az)$ (resp., property $(z)$).
  \end{prop}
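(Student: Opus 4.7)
The plan is to reduce both halves to the direct-sum results of Section~3 via the SVEP that every $(H)$-operator automatically enjoys. First I would observe that any $(H)$-operator $R$ has SVEP, since $H_0(R-\lambda I) = N((R-\lambda I)^{p(\lambda)})$ is closed for every $\lambda\in\C$ (and $H_0$ closed implies SVEP, as recalled in the introduction to Section~4). Consequently both $S$ and $T$ have SVEP, so the ``in particular'' clause of Lemma~\ref{lem1}(iii) yields the key identity
$$\sigma_{uw}(S\oplus T) = \sigma_{uw}(S)\cup\sigma_{uw}(T).$$

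For property~$(az)$ this spectral identity, combined with the hypothesis that $S$ and $T$ satisfy $(az)$, feeds directly into Theorem~\ref{T1} (equivalently, Corollary~\ref{thm1} applies because the SVEP of $S$ and $T$ puts them in the framework of Examples~(c)); this settles the $(az)$ case at once.

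For property~$(z)$ I would route the argument through Theorem~\ref{T4}, whose only remaining hypothesis is that $S$ and $T$ be a-isoloid, and this verification is the main work. My observation is that $(H)$ makes $S$ polaroid: at any $\lambda_0\in \mbox{iso}\,\sigma(S)$ the classical Riesz splitting $X=H_0(S-\lambda_0 I)\oplus K(S-\lambda_0 I)$, combined with the defining identity $H_0(S-\lambda_0 I)=N((S-\lambda_0 I)^{p(\lambda_0)})$, identifies $\lambda_0$ as a pole of $S$. On the other hand, property~$(z)$ implies property~$(az)$ by the implication diagram, and $(az)$ forces $\sigma(S)=\sigma_a(S)$ (as used in the sufficiency direction of the proof of Theorem~\ref{T1}, via \cite[Theorem~3.2]{Z1}). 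Hence every isolated point of $\sigma_a(S)=\sigma(S)$ is a pole of $S$, in particular an eigenvalue, so $S$ is a-isoloid; the same reasoning applies to $T$. An appeal to Theorem~\ref{T4} now concludes the $(z)$ case.

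The main obstacle is the a-isoloid step: without the equality $\sigma(S)=\sigma_a(S)$ supplied by $(az)$, an isolated point of $\sigma_a(S)$ need not be isolated in $\sigma(S)$, so the $(H)$/polaroid property by itself would not bridge the gap between $\sigma_a$ and $\sigma$. Once this bridge is built, everything else is a direct invocation of the general direct-sum machinery developed in Section~3.
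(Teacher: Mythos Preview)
Your proposal is correct and follows essentially the same route as the paper: SVEP for $(H)$-operators gives the spectral identity $\sigma_{uw}(S\oplus T)=\sigma_{uw}(S)\cup\sigma_{uw}(T)$ via Lemma~\ref{lem1}, Theorem~\ref{T1} handles the $(az)$ case, and for $(z)$ one combines the isoloid property of $(H)$-operators with $\sigma=\sigma_a$ (from property $(z)$) to obtain a-isoloid and then invokes Theorem~\ref{T4}. The only difference is that you justify the isoloid step in detail via the polaroid argument (Riesz splitting plus $H_0(S-\lambda_0 I)=N((S-\lambda_0 I)^{p(\lambda_0)})$), whereas the paper simply asserts that every $(H)$-operator is isoloid as a known fact.
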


  \begin{proof} Since $S$ and $T$ are $(H)$-operators, then they have SVEP and so have a shared stable sign index. From Lemma \ref{lem1}, we have
   $\sigma_{uw}(S\oplus T)=\sigma_{uw}(S)\cup\sigma_{uw}(T).$ Thus, if $S$ and $T$ satisfy $(az)$ then from Theorem \ref{T1}, $S\oplus T$ satisfies property $(az).$
  If $S$ and $T$ satisfy property $(z),$ then  $\sigma(S)=\sigma_a(S)$ and $\sigma(T)=\sigma_a(T).$ This implies (since every $(H)$-operator is isoloid) that $S$ and $T$ are a-isoloid. Then
  we conclude by Theorem \ref{T4} that $S\oplus T$ satisfies property
  $(z).$
  \end{proof}

In the next proposition, we give a similar result for the class of
paranormal operators on Hilbert spaces. We notice that a paranormal
operator may not be in the class of
 $(H)$-operators, for instance see \cite[Example 2.3]{Aie2}. Recall
 that a bounded linear operator $T$ on a Hilbert space $\h$ is said to be paranormal if
$||Tx||^2\leq ||T^2x||\,||x||,$ for all $x\in\h.$

  \begin{prop} If $S\in L(\h)$ and $T\in L(\h)$ are paranormal operators satisfying property  $(az)$ (resp., property $(z)$) then $S\oplus T$
  satisfies property $(az)$ (resp., property $(z)$).
  \end{prop}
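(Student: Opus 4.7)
The plan is to mimic the proof of the preceding proposition, replacing the two key properties of $(H)$-operators (SVEP and isoloidness) by the corresponding well-known facts for paranormal operators. First I would invoke the standard result (see \cite[Theorem 2.101]{Aie} or the analogous statement) that every paranormal operator on a Hilbert space has the SVEP. Consequently $S$ and $T$ both have the SVEP, so by the discussion in Example (c) on shared stable sign indices they have a shared stable sign index, and Lemma \ref{lem1}(iii) yields
\[
\sigma_{uw}(S\oplus T)=\sigma_{uw}(S)\cup\sigma_{uw}(T).
\]

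For the property $(az)$ half of the statement, this is now immediate: since $S$ and $T$ are assumed to satisfy $(az)$, the equality above combined with Theorem \ref{T1} gives property $(az)$ for $S\oplus T$.

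For the property $(z)$ half, I would argue exactly as in the $(H)$-operator case. By the implications recalled in the introduction, property $(z)$ forces property $(az)$, and property $(az)$ in turn forces $\sigma(S)=\sigma_a(S)$ and $\sigma(T)=\sigma_a(T)$ (this was established in the proof of Theorem \ref{prop1}). Now I would use the fact that paranormal operators on Hilbert spaces are isoloid (a classical result, see e.g.\ \cite{Aie}): every isolated point of $\sigma(S)$ is an eigenvalue of $S$, and likewise for $T$. Combining this with $\sigma(S)=\sigma_a(S)$ and $\sigma(T)=\sigma_a(T)$ shows that $S$ and $T$ are in fact \textit{a-isoloid}. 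Together with the already-established equality of the upper semi-Weyl spectra, Theorem \ref{T4} then gives property $(z)$ for $S\oplus T$.

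The only potentially delicate point is the appeal to the isoloid property of paranormal operators; this is the analogue of the step ``every $(H)$-operator is isoloid'' used in the previous proof, and it is precisely what allows us to bypass the hypothesis $\sigma_p^0(S)=\sigma_p^0(T)$ of Theorem \ref{T3} and instead use the more convenient Theorem \ref{T4}. No further calculation is required beyond citing these two standard facts about paranormal operators.
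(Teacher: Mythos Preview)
Your proposal is correct and follows essentially the same approach as the paper: cite that paranormal operators have the SVEP (hence shared stable sign index, hence the upper semi-Weyl spectrum equality via Lemma \ref{lem1}) and that they are isoloid (which, together with $\sigma=\sigma_a$ coming from property $(z)$, yields a-isoloidness), then apply Theorems \ref{T1} and \ref{T4}. The paper's own proof is in fact terser---it simply records these two facts about paranormal operators and refers back to the proof of the preceding proposition---so your write-up is just a more explicit unpacking of the same argument.
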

\begin{proof} According to \cite{Aie2},
every paranormal operator has the SVEP. Moreover, paranormal operators
are isoloid, see \cite[Lemma 2.3]{CH2}. We conclude as seen in the
proof of last proposition.
\end{proof}

A bounded linear operator $A\in L(X,Y)$ is said to be
\textit{quasi-invertible} if it is injective and has dense range.
Two bounded linear operators $T\in L(X)$ and $S\in L(Y)$ on complex
Banach spaces $X$ and $Y$ are \textit{quasisimilar} provided  there
exist quasi-invertible operators $A\in L(X,Y)$ and $B\in L(Y,X)$
such that $AT=SA$ and $BS=TB$.

  \begin{prop}If $S\in L(X)$ and $T\in L(Y)$ are quasisimilar operators satisfying  property $(z)$ and one of them has the  SVEP, then $S\oplus T$ satisfies property $(z).$
  \end{prop}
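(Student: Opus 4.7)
The plan is to apply Theorem~\ref{T3}. Since $S$ and $T$ already satisfy property $(z)$ by hypothesis, it will suffice to establish the two conditions $\sigma_p^0(S)=\sigma_p^0(T)$ and $\sigma_{uw}(S\oplus T)=\sigma_{uw}(S)\cup\sigma_{uw}(T)$, after which Theorem~\ref{T3} gives property $(z)$ for $S\oplus T$.

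First I would transfer the SVEP across the quasisimilarity, so that both $S$ and $T$ have it. Let $A\in L(X,Y)$ and $B\in L(Y,X)$ be the quasi-invertible intertwiners, with $AT=SA$ and $BS=TB$, and assume without loss of generality that $S$ has the SVEP. If $f:\u\longrightarrow X$ is analytic and $(T-\lambda I)f(\lambda)=0$ on $\u$, then applying $A$ gives $(S-\lambda I)(Af(\lambda))=A(T-\lambda I)f(\lambda)=0$ on $\u$; the SVEP of $S$ forces $Af\equiv 0$, and since $A$ is injective, $f\equiv 0$. Hence $T$ also has the SVEP.

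Next I would exploit the same intertwining to compute kernel dimensions: the identity $A(T-\lambda I)=(S-\lambda I)A$ shows that $A$ restricts to an injective linear map $N(T-\lambda I)\to N(S-\lambda I)$, and symmetrically $B$ restricts to an injective map $N(S-\lambda I)\to N(T-\lambda I)$. Therefore $\alpha(T-\lambda I)=\alpha(S-\lambda I)$ for every $\lambda\in\C$, which yields $\sigma_p^0(S)=\sigma_p^0(T)$.

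Finally, with both $S$ and $T$ enjoying the SVEP, part~(c) of the examples preceding Lemma~\ref{lem1} shows that $S$ and $T$ have a shared stable sign index, and Lemma~\ref{lem1}(iii) then delivers $\sigma_{uw}(S\oplus T)=\sigma_{uw}(S)\cup\sigma_{uw}(T)$. An application of Theorem~\ref{T3} completes the proof. The only potentially subtle step is the SVEP transfer across the asymmetric intertwiner $A\in L(X,Y)$; once that routine check is carried out, the remainder is a direct orchestration of results already proved in the paper.
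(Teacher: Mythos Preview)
Your proof is correct and follows essentially the same route as the paper: transfer the SVEP across the quasisimilarity, use the intertwiners to obtain $\sigma_p^0(S)=\sigma_p^0(T)$, and then invoke Theorem~\ref{T3} together with the $\sigma_{uw}$-equality that the SVEP guarantees via Lemma~\ref{lem1}(iii). The paper compresses all of this into two sentences, whereas you spell out the details of the SVEP transfer and the kernel-dimension argument explicitly.
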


  \begin{proof} Quasisimilarity implies the SVEP for both operators,
  and it implies that $\sigma_p^0(S)=\sigma_p^0(T).$ We conclude from
Theorem \ref{T3}.
\end{proof}

\begin{rema} It is well known that if $S\in L(X)$ and $T\in L(Y)$ have the SVEP,  then from \cite[Theorem 2.9]{Aie} the direct sum $S\oplus T$ has the SVEP. This implies that
$\sigma_{ubw}(S\oplus T)=\sigma_{ubw}(S)\cup \sigma_{ubw}(T).$  From
Theorem \ref{T5}, we obtain  analogous  preservation results
established  in the three last propositions for property $(gz).$
\end{rema}
\section{An a-Browder type theorem proof and counterexamples}

In this section we will give a correct proof of \cite[Theorem
2.3]{SZ}. In the original proof in \cite{SZ}, the equality
$$\sigma_p^0(S\oplus T)= \sigma_p^0(S)\cup \sigma_p^0(T)$$ was used
and consequently gave the equality:
$$\pi_{00}^a(S\oplus T)=\mbox{iso}[\sigma_a(S)\cup \sigma_a(T )]
\cap [\sigma_p^0(S)\cup \sigma_p^0(T)],$$ (see line 8 of the proof
of \cite[Theorem 2.3]{SZ}.)

 But these last 2 equalities
are false, see examples \ref{ex5a} and \ref{ex5b}.

We recall   that  an operator $T\in L(X)$ satisfies property $(sbaw)$
if $\sigma_a(T)\setminus\sigma_{ubw}(T)=\pi_{00}^a(T).$ In the
following theorem, we give the same version   of \cite[Theorem 2.3]{SZ} followed by  a correct
proof.

\begin{thm}
 Let $S\in L(X)$ and $T\in L(Y ).$ If S and T have property
(sbaw) and are a-isoloid, then the following assertions are
equivalent: \\
(i) $S\oplus T$ has property $(sbaw);$\\
 (ii) $\sigma_{ubw}(S\oplus T ) =
\sigma_{ubw}(S)\cup \sigma_{ubw}(T).$
\end{thm}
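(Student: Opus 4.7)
The plan is to first record that the inclusion $\sigma_{ubw}(S\oplus T)\subseteq\sigma_{ubw}(S)\cup\sigma_{ubw}(T)$ holds unconditionally, by the same index-addition argument as in Lemma \ref{lem1}(i): if $S-\lambda I$ and $T-\lambda I$ are both upper semi b-Weyl, one combines the restrictions $(S-\lambda I)_n$ and $(T-\lambda I)_m$ to produce an upper semi b-Weyl operator on the direct sum with non-positive index. With this in hand, each direction of the equivalence reduces to either a reverse inclusion or a single set-theoretic identity.

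For (i)$\Rightarrow$(ii), I would argue by contradiction. Take $\lambda\in\sigma_{ubw}(S)\cup\sigma_{ubw}(T)$; by symmetry assume $\lambda\in\sigma_{ubw}(S)\subseteq\sigma_a(S)$. If $\lambda\notin\sigma_{ubw}(S\oplus T)$, then property $(sbaw)$ for $S\oplus T$ gives $\lambda\in\pi_{00}^a(S\oplus T)$, so $\lambda$ is isolated in $\sigma_a(S\oplus T)=\sigma_a(S)\cup\sigma_a(T)$ with $\alpha((S\oplus T)-\lambda I)<\infty$. Isolation of $\lambda$ in the union forces isolation of $\lambda$ in $\sigma_a(S)$, and since $S$ is a-isoloid, $\lambda$ is an eigenvalue of $S$ with $\alpha(S-\lambda I)\leq\alpha((S\oplus T)-\lambda I)<\infty$, hence $\lambda\in\pi_{00}^a(S)$. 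Property $(sbaw)$ for $S$ then forces $\lambda\notin\sigma_{ubw}(S)$, contradicting our choice.

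For (ii)$\Rightarrow$(i), I would compute $\sigma_a(S\oplus T)\setminus\sigma_{ubw}(S\oplus T)$ explicitly. Using $\sigma_a(S\oplus T)=\sigma_a(S)\cup\sigma_a(T)$ together with the hypothesis $\sigma_{ubw}(S\oplus T)=\sigma_{ubw}(S)\cup\sigma_{ubw}(T)$, a routine case split on whether $\lambda$ lies in one or both of $\sigma_a(S),\sigma_a(T)$ yields
\begin{align*}
\sigma_a(S\oplus T)\setminus\sigma_{ubw}(S\oplus T)&=[(\sigma_a(S)\setminus\sigma_{ubw}(S))\cap\rho_a(T)]\\
&\quad\cup[(\sigma_a(T)\setminus\sigma_{ubw}(T))\cap\rho_a(S)]\\
&\quad\cup[(\sigma_a(S)\setminus\sigma_{ubw}(S))\cap(\sigma_a(T)\setminus\sigma_{ubw}(T))].
\end{align*}
Property $(sbaw)$ for $S$ and $T$ rewrites each $\sigma_a(\cdot)\setminus\sigma_{ubw}(\cdot)$ as $\pi_{00}^a(\cdot)$, and Lemma \ref{cl}, available precisely because $S$ and $T$ are a-isoloid, identifies the resulting right-hand side with $\pi_{00}^a(S\oplus T)$. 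This is exactly property $(sbaw)$ for $S\oplus T$.

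The main obstacle, and the reason the argument in \cite{SZ} required correction, is that one cannot invoke the naive identity $\sigma_p^0(S\oplus T)=\sigma_p^0(S)\cup\sigma_p^0(T)$; its correct replacement is Lemma \ref{cl}, which is available only under the a-isoloid hypothesis. The proof is accordingly organized so that a-isoloidness enters only via Lemma \ref{cl} in direction (ii)$\Rightarrow$(i) and via the isolation-transfer step in (i)$\Rightarrow$(ii); nothing else in the argument depends on it.
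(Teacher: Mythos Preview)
Your argument is correct, and the direction (ii)$\Rightarrow$(i) is essentially identical to the paper's: both compute $\sigma_a(S\oplus T)\setminus\sigma_{ubw}(S\oplus T)$ via the three-piece decomposition, substitute $\pi_{00}^a(S)$ and $\pi_{00}^a(T)$ using property $(sbaw)$, and then invoke Lemma~\ref{cl} to identify the result with $\pi_{00}^a(S\oplus T)$.

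The direction (i)$\Rightarrow$(ii), however, is handled differently. The paper does not argue by contradiction and does not use the a-isoloid hypothesis at all for this implication. Instead it observes that property $(sbaw)$ for $S\oplus T$ forces generalized a-Browder's theorem for $S\oplus T$ (citing \cite{KZZ}), and then an analogue of Lemma~\ref{lem1}(ii) for the upper semi-b-Weyl spectrum (namely \cite[Lemma~2.1]{SZ}) delivers the equality $\sigma_{ubw}(S\oplus T)=\sigma_{ubw}(S)\cup\sigma_{ubw}(T)$ outright. So the paper actually proves the slightly stronger fact that (i)$\Rightarrow$(ii) holds ``with no other restriction,'' whereas your elementary contradiction argument consumes the a-isoloid hypothesis (at the step where you need $\lambda\in\mbox{iso}\,\sigma_a(S)$ to force $\lambda\in\sigma_p(S)$). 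What your approach buys is self-containment: you avoid importing the implication $(sbaw)\Rightarrow$ generalized a-Browder and the external lemma from \cite{SZ}, at the cost of a mildly weaker forward implication. Your closing remark that a-isoloidness is needed in (i)$\Rightarrow$(ii) therefore overstates matters; it is needed in \emph{your} proof of that direction, but the paper shows it is in fact dispensable there.
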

\begin{proof} (i) $\Longrightarrow$ (ii)  The property $(sbaw)$ for
$S\oplus T$ implies (ii) with no other restriction, since form
\cite{KZZ}, $S\oplus T$ satisfies generalized a-Browder's theorem,
and hence  by \cite[Lemma 2.1]{SZ}, $\sigma_{ubw}(S\oplus
T)=\sigma_{ubw}(S)\cup \sigma_{ubw}(T)$. \\
(ii) $\Longrightarrow$
(i) Suppose that $\sigma_{ubw}(S\oplus T)=\sigma_{ubw}(S)\cup
\sigma_{ubw}(T).$ Since $S$ and  $T$ are a-isoloid   then
\begin{eqnarray} \pi_{00}^a(S\oplus T)
&=&[\pi_{00}^a(S)\cap\rho_a(T)]\cup[\pi_{00}^a(T)\cap\rho_a(S)]\cup
[\pi_{00}^a(S)\cap \pi_{00}^a(T)], \mbox{see Lemma
\ref{cl}}.\nonumber
\end{eqnarray}
 As $S$ and $T$
satisfy property $(sbaw),$ then
\begin{eqnarray}\sigma_a(S\oplus T)\setminus
\sigma_{ubw}(S\oplus T)&=& [\sigma_a(S)\cup\sigma_a(T)]\setminus
[\sigma_{ubw}(S)\cup\sigma_{ubw}(T)]\nonumber\\
&=&
[\pi_{00}^a(S)\setminus\sigma_a(T)]\cup[\pi_{00}^a(T)\setminus\sigma_a(S)]\cup
[\pi_{00}^a(S)\cap \pi_{00}^a(T)].\nonumber
\end{eqnarray}
Hence  $\sigma_a(S\oplus T)\setminus\sigma_{ubw}(S\oplus T)=
\pi_{00}^a(S\oplus T)$ and so  property $(sbaw)$ is satisfied by
$S\oplus T$.
\end{proof}

We recall that $\sigma_p(S\oplus T)= \sigma_p(S)\cup \sigma_p(T)$ is
always true. On the other hand we have always $\sigma_p^0(S\oplus
T)\subset \sigma_p^0(S)\cup \sigma_p^0(T),$ but this inclusion may
be proper as we can see in the following examples.
\begin{ex}\label{ex5a}
Let $P\in L(\ell^2(\mathbb{\N}))$ be defined  by
 $P(x_1,x_2,\ldots)=(0,x_2,x_3, x_4,\ldots).$ Take $S=P$ and $T=I-P.$ Then
 $\sigma_p^0(S)=\{0\}$ and $\sigma_p^0(T)=\{1\}$ but $\sigma_p^0(S\oplus
T)=\emptyset.$ Note that $S$ and $T$ are a-isoloid.
\end{ex}
\begin{ex} \label{ex5b} Let $R$ be the unilateral right shift on $\ell^2(\mathbb{\N}).$ We
define $S=R\oplus P$ and $T= U\oplus 0$ where $P$ is the projection
defined in Example \ref{ex5a} and $U$ is defined as follows (see \cite{GK}):\\ $U:
\ell^1(\mathbb{\N})\rightarrow \ell^1(\mathbb{\N})$;    $$U(x) = (0,
a_1x_1, a_2x_2, . . . , a_kx_k, . . .)\,\,\,\forall\,\,x=(x_i)\in
\ell^1(\mathbb{\N})
$$
 where $(a_i)$ is a sequence of complex numbers such that $0 < |a_i |
\leq 1$  and $\sum_{i=1}^{\infty} |a_i | < \infty.$\\
 Then $S\in L(X)$ and $T\in L(Y)$ where $X$ and $Y$ are the Banach
 spaces $\ell^2(\mathbb{\N})\oplus\ell^2(\mathbb{\N})$ and
 $\ell^1(\mathbb{\N})\oplus\ell^1(\mathbb{\N})$ respectively. And we
 have:

 $$\sigma_a(S)=C(0, 1)\cup \{0\},\,\,\,\sigma_{ubw}(S)=C(0, 1) \,\,\mbox{and}\,\,
 \sigma_p^0(S)= \{0\}= \pi_{00}^a(S).$$
 $$\sigma_a(T)=\{0\}= \sigma_{ubw}(T)\,\,\mbox{and}\,\,
\sigma_p^0(T)= \emptyset= \pi_{00}^a(T).$$ It follows that both $S$
 and $T$ satisfy property $(sbaw)$ and $\sigma_p^0(S\oplus T)\neq \sigma_p^0(S)\cup
 \sigma_p^0(T),$ since  $\sigma_p^0(S\oplus T)= \emptyset$ and $\sigma_p^0(S)\cup
 \sigma_p^0(T)= \{0\}.$
 Note that $S$ and $T$ are also a-isoloid and
 $$\emptyset=\pi_{00}^a(S\oplus T)\neq\mbox{iso}[\sigma_a(S)\cup \sigma_a(T )]
\cap [\sigma_p^0(S)\cup \sigma_p^0(T)]=\{0\}.$$
 \end{ex}

\goodbreak
{\small \noindent Abdelmajid Arroud,\\
\noindent Laboratory (L.A.N.O),\\
\noindent Department of Mathematics,\\
\noindent Faculty of Science, Mohammed I University ,\\
\noindent PO Box 717, Oujda 60000 Morocco.\\
\noindent arroud.bm@gmail.com\\

\noindent \noindent Hassan  Zariouh,\newline D\'epartement de Math\'ematiques,\newline Centre R\'egional pour les M\'etiers
de l'\'Education\newline et de la Formation de  l'Oriental (CRMEFO),\newline
  Oujda 60000 Morocco.\newline
 Laboratoire (L.A.N.O), Facult\'e des Sciences,\newline  Universit\'e Mohammed I.\newline
 \noindent h.zariouh@yahoo.fr

\end{document}